\DeclareMathOperator{\sgn}{sgn}
\DeclareMathOperator{\minmod}{minmod}
\DeclareMathOperator{\Span}{Span}
\DeclareMathOperator{\st}{s.t.}
\begin{document}
\title{Integrated Linear Reconstruction for Finite Volume Scheme on
        Arbitrary Unstructured Grids}

\author{
  Li Chen\affil{1}, 
  Guanghui Hu\affil{2,3}\corrauth, 
  and Ruo Li\affil{4}
}

\address{
  \affilnum{1}\ School of Mathematical Sciences, Peking University, 
                      Beijing, China\\
  \affilnum{2}\ Department of Mathematics, University of Macau, 
                     Macao SAR, China\\
  \affilnum{3}\ UM Zhuhai Research Institute, Zhuhai, Guangdong, China\\
  \affilnum{4}\ HEDPS \& CAPT, LMAM \& School of Mathematical 
                    Sciences, Peking University, Beijing, China
}

\emails{
  {\tt cheney@pku.edu.cn} (L.~Chen), 
  {\tt garyhu@umac.mo} (G.H.~Hu), 
  {\tt rli@math.pku.edu.cn} (R.~Li)
}

\begin{abstract}
  In [L. Chen and R. Li, Journal of Scientific Computing, Vol. 68, pp. 
  1172--1197, (2016)], an integrated linear reconstruction was proposed for 
  finite volume methods on unstructured grids.  However, the geometric 
  hypothesis of the mesh to enforce a local maximum principle is too 
  restrictive to be satisfied by, for example, locally refined meshes or 
  distorted meshes generated by arbitrary Lagrangian-Eulerian methods in 
  practical applications.  In this paper, we propose an improved integrated 
  linear reconstruction approach to get rid of the geometric hypothesis. The 
  resulting optimization problem is a convex quadratic programming problem, 
  and hence can be solved efficiently by classical active-set methods.  The 
  features of the improved integrated linear reconstruction include that i). the 
  local maximum principle is fulfilled on arbitrary unstructured grids, ii). the
  reconstruction is parameter-free, and iii). the finite volume scheme is 
  positivity-preserving when the reconstruction is generalized to the Euler 
  equations.  A variety of numerical experiments are presented to 
  demonstrate the performance of this method.
\end{abstract}
\keywords{
  linear reconstruction; 
  finite volume method;
  local maximum principle;
  positivity-preserving;
  quadratic programming
}
\ams{65M08, 65M50, 76M12, 90C20}

\maketitle

\section{Introduction}

The high-order finite volume schemes can be summarized by a
reconstruct-evolve-average (REA) process, i.e. a piecewise polynomial is
reconstructed in each cell with given cell averages, then the governing 
equation is evolved according to those polynomials, and finally the cell 
averages are recalculated.  Among these three stages, reconstruction plays 
an important role in giving high-order solutions without numerical 
oscillations.  Nowadays, second-order methods have been the workhorse for 
computational fluid dynamics \cite{Wang2013}.  To achieve second-order 
accuracy, a prediction of the gradient is obtained first.  Due to the possible 
non-physical flow caused by the underestimation or overestimation of the 
gradient, it is necessary to limit the gradient in a proper way, and thus the 
prediction-limiting algorithm arises.  In one-dimensional case, total variation 
diminishing (TVD) limiters are commonly used in designing high-resolution 
schemes for conservation laws.  Unfortunately, it is very difficult to 
implement the TVD limiter for multi-dimensional problems, especially on 
unstructured grids.  To get around this negative result, a new class of 
positive schemes has been proposed \cite{Spekreijse1987} which
ensures a local maximum principle.  Since the introduction of this idea, a 
large number of limiters have then been developed.  These limiters include, 
among others, the Barth's limiter \cite{Barth1989}, the Liu's limiter 
\cite{Liu1993}, the maximum limited gradient (MLG) limiter 
\cite{Batten1996} and the projected limited central difference (PLCD) limiter 
\cite{Hubbard1999}.  See \cite{Hubbard1999} for a comprehensive 
comparison of these limiters.  More recently, Park \textit{et al.} successfully 
extended the multi-dimensional limiting process (MLP) introduced in 
\cite{Kim2005} from structured grids to unstructured grids 
\cite{Park2010}.  Li \textit{et al.} proposed the weighted biased averaging 
procedure (WBAP) limiter \cite{Li2011} based upon the biased averaging 
procedure (BAP) limiter which was introduced in \cite{Choi1998}.
Towards the positivity-preserving property, which is crucial for the stability
on solving the Euler equations, there have also been several pioneering 
works. For instance, Perthame and Shu \cite{Perthame1996} developed a 
general finite volume framework on preserving the positivity of density and 
pressure when solving the Euler equations. Motivated by this work, the 
framework was extended to the discontinuous Galerkin method on 
rectangular meshes \cite{Zhang2010} and on triangular meshes 
\cite{Zhang2012}, and to the Runge-Kutta discontinuous Galerkin method 
\cite{Liu2016}. More recently, a parametrized limiting technique was 
proposed in \cite{Christlieb2015} to preserve the positivity property on
solving the Euler equations on unstructured grids.

Besides the above prediction-limiting algorithm in the reconstruction, there 
are also methods which deliver the limited gradient in a single process.  For 
example, Chen and Li  \cite{Chen2016} introduced the concept of 
integrated linear reconstruction (ILR), in which the limited gradient is 
computed by solving a linear programming on each cell using an efficient 
iterative method.  A similar approach was given by May and Berger 
\cite{May2013}.  However, the fulfillment of local maximum principle of 
these methods requires certain geometric hypothesis on the grids 
\cite{Chen2016}.  Buffard and Clain \cite{Buffard2010} considered a 
monoslope MUSCL method, where a least-squares problem subjected to 
maximum principle constraints was imposed.  They solved the optimization 
problem explicitly.  However, the involved cases for triangular grids, 
discussed in the article, were rather complicated, let alone irregular grids 
with hanging nodes appeared in the computational practice such as mesh 
adaptation.  This motivates us to discard the unsatisfactory geometric
hypothesis on unstructured grids by imposing constraints on the quadrature
points, and to solve the optimization problem iteratively.  In our linear
reconstruction, the gradient is indeed obtained by solving a quadratic
programming problem using the active-set method.  It can be verified that 
the resulting finite volume scheme for scalar conservation laws satisfies a 
local maximum principle on arbitrary unstructured grids.  Besides, this linear
reconstruction can be easily adapted to the Euler equations.  And it can be 
shown that the numerical solutions preserve the positivity of density and 
pressure.

The remainder of this paper is organized as follows.  In Section 2, we review
the MUSCL-type finite volume scheme on unstructured grids.  In Section 3, 
we describe the improved integrated linear reconstruction based on solving 
a series of quadratic programming problems.  Section 4 is devoted to the 
discussion of local maximum principle for scalar conservation laws as well 
as the positivity-preserving property for the Euler equations.  Numerical 
results are shown for the scalar problems and the Euler equations to 
validate the effectiveness and robustness of our method in Section 5.  
Finally, a short conclusion will be drawn in Section 6.

\section{MUSCL-type finite volume scheme}

In this section we briefly introduce the MUSCL-type finite volume methods 
for hyperbolic conservation laws on two-dimensional unstructured grids. 
The extension to three-dimensional cases is straightforward.
Consider the following system of hyperbolic conservation laws:
\begin{equation}
  \dfrac{\partial\bm u}{\partial t}+\nabla\cdot\bm F(\bm u)=\bm 0.
  \label{hcl}
\end{equation}
The computational domain is triangulated into an unstructured grid 
$\mathcal{T}$.  To obtain the MUSCL-type finite volume scheme,
we integrate \eqref{hcl} on some control volume $T_0\in\mathcal T$ 
\begin{equation}
  \dfrac{\mathrm d\bm u_0}{\mathrm dt}+\dfrac1{|T_0|}\oint_{\partial T_0}\bm{F}(\bm u) 
  \cdot \bm{n} \mathrm ds=\bm 0,
  \label{semidiscrete}
\end{equation}
where the cell-averaged solution $\bm u_0(t)=\dfrac{1}{|T_0|}
\displaystyle\int _{T_0}\bm u(\bm x,t)\mathrm d\bm x$ 
and $\bm n$ is the unit outward normal over the boundary $\partial T_0$
consisting of $J$ edges $e_1,e_2,\cdots,e_J$.

Since we are concerned with second-order schemes, the boundary integral 
appeared in \eqref{semidiscrete} is approximated by the midpoint 
quadrature rule, namely,
\begin{equation}
\oint_{\partial T_0}\bm F(\bm u)\cdot\bm n\mathrm ds 
\approx \sum_{j=1}^J |e_j| \bm F(\bm u(\bm z_{j}, t))\cdot\bm n_{j},
\label{GaussQuad}
\end{equation}
where $\bm z_j$ and $\bm n_j$ represent the midpoint and normal of the 
edge $e_j~(j=1,2,\cdots,J)$. To maintain stability, the flux term 
$\bm F(\bm u(\bm z_j,t))\cdot\bm n_{j}$ appeared in \eqref{GaussQuad}
is replaced by a numerical flux function 
$\mathcal F(\bm u_{j}^-,\bm u_{j}^+;\bm n_{j})$,
where $\bm u_{j}^\pm$ denote values of numerical solutions at $\bm z_j$ 
outside/inside the cell $T_0$ respectively $(j=1,2,\cdots,J)$. The resulting 
semi-discrete scheme is
\[
\dfrac{\mathrm d\bm u_0}{\mathrm dt} + \dfrac1{|T_0|} \sum_{j=1}^J
\mathcal F(\bm u_{j}^-,\bm u_{j}^+;\bm n_{j}) |e_{j}| = \bm 0.
\]

To achieve second-order numerical accuracy, the values $\bm u_{j}^\pm$ 
are evaluated from the piecewise linear solution reconstructed from nearby 
cell averages. In Section 3 we will describe how to construct an 
appropriate piecewise linear solution which satisfies certain properties such 
as local maximum principle or positivity-preserving property. After that, we 
obtain a system of ordinary differential equations for $\bm u_h=
\displaystyle\sum_{T_0\in\mathcal T}\bm u_0\mathbb 1_{T_0}$:
\begin{equation}
\dfrac{\mathrm d\bm u_h}{\mathrm dt}=\mathcal L(\bm u_h)
= -\sum_{T_0\in\mathcal T}\sum_{j=1}^J\dfrac{|e_j|}{|T_0|}
\mathcal F(\bm u_j^-,\bm u_j^+;\bm n_j)\mathbb 1_{T_0}.
\label{eq:ode}
\end{equation}

Finally, the system \eqref{eq:ode} is discretized by the SSP Runge-Kutta 
method to achieve second-order temporal accuracy \cite{Gottlieb2001}:
\begin{equation}
  \begin{cases}
    \bm u^*_h  = \bm u^{n}_h+\Delta t_n {\mathcal L}(\bm u^{n}_h), \\
    \bm u^{n+1}_h  = \dfrac12\bm u^{n}_h+\dfrac12 \left(
      \bm u^{*}_h+\Delta t_n {\mathcal L}(\bm u^{*}_h)\right),
  \end{cases}
  \label{eq:TVDRK2}
\end{equation}
where the time step length $\Delta t_n$ is determined through the CFL 
condition. 

To effectively control the total amount of cells,  we adopt the $h$-adaptive 
method following Li \cite{Li2005}. This method relaxes the regularity 
requirement of the mesh. Therefore, even if the background mesh is a 
regular mesh where any two adjacent cells share a whole edge, an irregular 
mesh may be produced during the refinement process. The irregular 
structure may lead to different resolution of the adjacent cells used for 
computing numerical fluxes. Fortunately, the strategy of mesh refinement 
proposed in \cite{Li2005} prevents the adjacent cells to differ more than 
one level of resolution, and thus there are essentially three possibilities 
which are depicted schematically in Fig. \ref{fig:fluxcom}. The shaded 
triangle appeared in case b) is called \emph{twin triangle}, since it can be 
divided into two smaller triangles by one of its medians. The twin triangle 
has one special edge shared by two neighbors. However, if we treat the twin 
triangle as a quadrilateral with $J=4$ edges, then all the computations can 
be carried out in the usual way.

\begin{figure}[htbp]
  \centering
  \subfloat[]{\includegraphics[width=.3\textwidth]{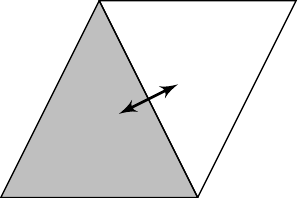}}
  \quad
  \subfloat[]{\includegraphics[width=.3\textwidth]{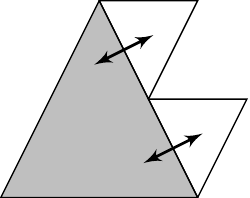}}
  \quad
  \subfloat[]{\includegraphics[width=.3\textwidth]{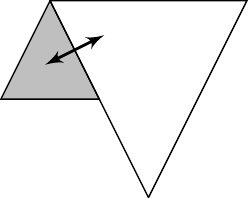}}
    \caption{Flux computation for cells with
     a) a neighbor of the same resolution;
     b) neighbors of higher resolution;
     c) a neighbor of lower resolution.}
    \label{fig:fluxcom}
\end{figure}

\section{Construction of the integrated linear reconstruction}

In this section we describe the improved integrated linear reconstruction. 
Consider the solution $u(\bm x,t)$ of a scalar conservation law. For 
simplicity we will omit the dependency in time throughout this discussion.
The reconstructed linear function on a given cell $T_0\in\mathcal T$ with
the centroid $\bm x_0$ can be formulated as 
\begin{equation}
\hat u_0(\bm x)=u_0+\bm L^\top(\bm x-\bm x_0),
\label{eq:linearfun}
\end{equation}
where $\bm L$ is the gradient vector.

To compute the gradient $\bm L$ we require information from neighbors of 
$T_0$. In the interior of the computational domain the von Neumann 
neighbors are sufficient for reconstruction. Nevertheless, this is not the 
case on the boundary, where we need special treatment, as will be 
described in Section \ref{sec:bndtr}.

To achieve better accuracy without non-physical oscillation, we minimize the 
residuals over all gradient $\bm L$ subjected to some stability conditions.
The solution of this optimization problem, if exists, is used to construct a 
piecewise linear solution. This is the rough idea of the integrated linear 
reconstruction. 

\subsection{Formulation of optimization problem}

In the integrated linear reconstruction proposed in this paper, the objective 
function is the sum of \emph{squared} residuals, rather than the sum of 
\emph{absolute} residuals that was used in \cite{Chen2016},
\begin{equation}
\delta (\bm L)=\sum_{j=1}^J (\hat u_0(\bm x_j)-u_j)^2.
\label{eq:objfun}
\end{equation}
Potentially, the smoothness of the reconstructed solutions can be improved. 
In \cite{Chen2016}, it has been shown that to theoretically guarantee the 
maximum principle, a certain hypothesis on the geometry of elements 
needs to be satisfied. The hypothesis can be smoothly satisfied with quality 
Delaunay triangular mesh, which can be generated by most mature tools 
such as EasyMesh \cite{EasyMesh}. However, it can be shown that the 
hypothesis can be easily violated on distorted meshes. To resolve this 
issue, we discard the constraints on the neighboring centroids and impose 
the following inequalities on the midpoint $\bm z_j$:
\begin{equation}
m_j\le \hat u_0(\bm z_{j})\le M_j,\quad j=1,2,\cdots,J,
\label{eq:constraints}
\end{equation}
where the lower and upper bounds are given by
\[
m_j=\min\{u_0,u_j\},\quad 
M_j=\max\{u_0,u_j\},\quad 
j=1,2,\cdots,J.
\]
Now using the definition \eqref{eq:linearfun} of linear function 
$\hat u_0(\bm x)$,  the objective function \eqref{eq:objfun} becomes
\[
\begin{split}
\delta(\bm L)&=\sum_{j=1}^J (\hat u_0(\bm x_j)-u_j)^2=\sum_{j=1}^J 
(u_0-u_j+\bm r_j^\top\bm L)^2 \\
& =\sum_{j=1}^J \left((u_0-u_j)^2+2(u_0-u_j)
\bm r_j^\top\bm L+\bm L^\top\bm r_j\bm r_j^\top\bm L\right)\\
& = 
\bm L^\top\mathbf G\bm L+2\bm c^\top\bm L+\text{const,}
\end{split}
\]
and the constraints \eqref{eq:constraints} become
\[
m_j\le \hat u_0(\bm z_{j})=u_0+(\bm z_{j}-\bm x_0)^\top\bm L
=u_0+\bm a_{j}^\top\bm L\le M_j,
\]
where 
\[
\mathbf G=\sum_{j=1}^J \bm r_j\bm r_j^\top,\quad
\bm c= \sum_{j=1}^J (u_0-u_j)\bm r_j,\quad
\bm r_j = \bm x_j-\bm x_0,\quad
\bm a_j = \bm z_j-\bm x_0,\quad
j=1,2,\cdots,J.
\]
Consequently, the linear reconstruction reduces to the following 
\emph{double-inequality constrained quadratic programming (QP) problem}
\begin{equation}
\begin{aligned}
\min~ & \dfrac{1}{2}\bm L^\top\mathbf G\bm L+\bm c^\top\bm L \\
\st~ & \overline{\bm m}\le \mathbf A\bm L\le \overline{\bm M},
\end{aligned}
\label{eq:dic}
\end{equation}
where 
\begin{gather*}
\mathbf A=\left[\bm a_1,\bm a_2,\cdots,\bm a_J\right]^\top,\quad
\overline{\bm m}=
\left[\overline m_1,\overline m_2,\cdots,\overline m_J\right]^\top,\quad 
\overline{\bm M}=
\left[\overline M_1,\overline M_2,\cdots,\overline M_J\right]^\top,\\
\overline m_j =m_j-u_0,\quad
\overline M_j =M_j-u_0,\quad j=1,2,\cdots,J.
\end{gather*}

It is reasonable to assume that 
\[
\Span\{\bm r_1,\bm r_2,\cdots,\bm r_J\}=\mathbb R^2,
\]
which is true for most of the triangular grids in practical computations.
Then the matrix $\mathbf G$ is positive definite and thus the existence and 
uniqueness of the minimizer $\bm L$ are guaranteed. With the gradient 
$\bm L$ in hand, we build the interior value $u_j^-$ as follows:
\[
u_j^-=\hat u_0(\bm z_j)=u_0+\bm L^\top(\bm z_j-\bm x_0),\quad
j=1,2,\cdots,J.
\]
In the following discussion we develop an efficient solver for the problem 
\eqref{eq:dic}.

\subsection{An active-set method for quadratic programming problem}

The active-set method has been widely used since the 1970s and is 
effective for small- and medium-sized QP problems. This method updates 
the solution by solving a series of QP subproblems in which some of the 
inequality constraints are imposed as equalities. As for the 
double-inequality constrained problem \eqref{eq:dic}, any constraint is 
assumed to be active on a single side on each step in the active-set 
method, even if both sides of this constraint hold with equality. Therefore, 
we introduce a balanced ternary variable $\delta_j$ to indicate the active 
state of $j$-th constraint, i.e.
\[
\delta_j=
\begin{cases}
+1, & \text{if left-hand side of } j \text{-th constraint is active},\\
-1, & \text{if right-hand side of } j \text{-th constraint is active},\\
0, & \text{if neither side of } j \text{-th constraint is active}.
\end{cases}
\]
We generate iterates of \eqref{eq:dic} that remain feasible while steadily 
decreasing the objective function. To be more precise, given $\bm L_k$ 
obtained from the $k$-th iteration, we solve the following QP problem for 
the descending direction $\bm p_k$,
\begin{equation}
\begin{split}
\min~ & \dfrac{1}{2}\bm p_k^\top\mathbf G\bm p_k+
(\mathbf G\bm L_k+\bm c)^\top\bm p_k \\
\st~ & \mathbf M \bm p_k=\bm 0,
\end{split}
\label{eq:subqp}
\end{equation}
where $\mathbf M=[\delta_j\bm a_j^\top]_{\delta_j\neq 0}$ is composed 
of normals of the active constraints. 

The first-order necessary conditions for $\bm p_k$ to be a solution of 
\eqref{eq:subqp}
imply that there is a vector of Lagrange multipliers $\bm{\lambda}$ such 
that 
\begin{subnumcases}{}
\mathbf M^\top\bm\lambda-\mathbf G\bm p_k=\mathbf G\bm{L_k}+
\bm c , 
\label{eq:KKT1}
\\
\mathbf M\bm p_k=\bm 0  .
\label{eq:KKT2}
\end{subnumcases}
Multiply \eqref{eq:KKT1} by $\mathbf M\mathbf G^{-1}$ and then add 
\eqref{eq:KKT2} to obtain a linear system in the vector $\bm{\lambda}$
alone:
\[
(\mathbf M\mathbf G^{-1}\mathbf M^\top)\bm\lambda=
\mathbf M(\bm L_k+\mathbf G^{-1}\bm c).
\]
We solve this symmetric positive definite system for $\bm{\lambda}$. Then 
$\bm p_k$ can be recovered from 
\[
\mathbf G\bm p_k=
\mathbf M^\top\bm\lambda-\mathbf G\bm L_k-\bm c.
\]

If the descending direction $\bm p_k$ is nonzero, we set 
$\bm L_{k+1}=\bm L_k+\alpha_k\bm p_k$ where the step-length 
parameter $\alpha_k$ is chosen to be the largest value in the range 
$[0, 1]$ such that all constraints are satisfied. Indeed an explicit formula for 
$\alpha_k$ can be derived. To satisfy the $j$-th constraint, we have
\[
m_j \le u_0+\bm a_j^\top(\bm L_k+\alpha_k\bm p_k)\le M_j,
\]
or
\[
m_j-u_0-\bm a_j^\top\bm L_k \le (\bm a_j^\top\bm p_k)\alpha_k
\le M_j-u_0-\bm a_j^\top\bm L_k.
\]
Therefore, to maximize the decrement of objective function, the step-length 
parameter $\alpha_k$ can be chosen in the following way:
\[
\alpha_k:=\min\left\{1,\min_{1\le j\le J}
\beta_j \right\},\quad \beta_j =
\begin{cases}
\dfrac{M_j-u_0-\bm a_j^\top\bm L_k}{\bm a_j^\top\bm p_k},
& \bm a_j^\top\bm p_k > 0, \\
\dfrac{m_j-u_0-\bm a_j^\top\bm L_k}{\bm a_j^\top\bm p_k},
& \bm a_j^\top\bm p_k < 0, \\
+\infty, & \bm a_j^\top\bm p_k=0.
\end{cases}
\]
If $\alpha_k<1$, then there exists an index $j$ such that 
$\beta_j=\alpha_k$. In this case the active indicator $\delta_j$ is adjusted 
to $-\sgn(\bm a_j^\top\bm p_k)$ accordingly.

On the other hand, if $\bm p_k$ is zero, then we check the signs of the 
Lagrange multipliers. If all the multipliers are non-negative, then we have 
achieved optimality. Otherwise, we can find a feasible direction by dropping 
the constraint corresponding to the most negative multiplier.

The initial iterate $\bm L_0$ can be any feasible solution of \eqref{eq:dic} 
such as the null solution $\bm L_0=\bm 0$.  And the initial active 
indicators $\delta_j$ are all set to zero. The whole algorithm is sketched 
out in Appendix A.  We refer the readers to \cite{Nocedal2006} for more 
details about the active-set methods.

\begin{remark}[Connection with MC limiter]
  Let us investigate a special case: the ILR on a one-dimensional grids with 
  uniform spacing $h$. For each cell $T_0$, let the left and right neighbors 
  be labeled with 1 and 2 respectively. Furthermore, the backward and 
  forward slopes are denoted by $\sigma_1$ and $\sigma_2$ 
  respectively, i.e.
  \[
  \sigma_1=\dfrac{u_1-u_0}{x_1-x_0}=\dfrac{u_0-u_1}{h},\quad 
  \sigma_2=\dfrac{u_2-u_0}{x_2-x_0}=\dfrac{u_2-u_0}{h}.
  \]
  The problem \eqref{eq:dic} then reduces to the following univariate QP 
  problem
  \begin{equation}
  	\begin{aligned}
  		\min~ & L^2-(\sigma_1+\sigma_2)L \\
  		\st~ & 2\sigma_1^-\le L \le 2\sigma_1^+, \\
  		&2\sigma_2^-\le L \le 2\sigma_2^+ .
  	\end{aligned}
  	\label{eq:mclimiter}
  \end{equation}
  The optimal solution of \eqref{eq:mclimiter}
  \[
  L^*=\minmod
  \left(2\sigma_1,2\sigma_2,\dfrac{\sigma_1+\sigma_2}{2}\right),
  \]
  is the well-known \emph{MC (monotonized central-difference) limiter}, 
  where the \emph{minmod} function
  \[
  \minmod(a,b,c)
  =
  \begin{cases}
  \min\{|a|,|b|,|c|\}\sgn(a), & 
  \text{if~} a,b \text{~and~} c \text{~have the same
  	sign},\\
  0, & \text{otherwise}.
  \end{cases}
  \]	
\end{remark}

\begin{remark}[Connection with Barth's limiter]
  If the gradient is further restricted to the direction of unlimited gradient,
  i.e. $\bm L=-\phi \mathbf G^{-1}\bm c$ for some $\phi\in[0,1]$, then
  we arrive at the \emph{Barth's limiter} \cite{Barth1989}. A simple 
  derivation yields the following explicit expression for $\phi$
  \[
  \phi=\min_{1\le j\le J}
  \begin{cases}
  \dfrac{M_j-u_0}{-\bm c^\top\mathbf G^{-1}\bm a_j},
  & \text{if~}-\bm c^\top\mathbf G^{-1}\bm a_j > M_j-u_0, \\
  \dfrac{m_j-u_0}{-\bm c^\top\mathbf G^{-1}\bm a_j},
  & \text{if~}-\bm c^\top\mathbf G^{-1}\bm a_j < m_j-u_0, \\
  1, & \text{otherwise.} 
  \end{cases}
  \]
  A comparison of ILR against Barth's limiter will be provided in Section 5.	
\end{remark}

\subsection{Boundary treatment}\label{sec:bndtr}

For the cells located around the domain boundary, the proposed 
reconstruction can not be implemented smoothly according to the procedure 
introduced in the previous subsection.  On the one hand, for those cells with 
at least one edge on the boundary, the von Neumann neighbors are 
inadequate to give a reasonable reconstruction (Fig. \ref{fig:bctreat} (a)).  
Thus, besides the von Neumann neighbors, we include those cells sharing
at least one vertex with the current cell, which introduces the so-called 
\emph{Moore neighbors} $T_{M_1},T_{M_2},\cdots,T_{M_K}$
(Fig. \ref{fig:bctreat} (b)). On the other hand, it is hard to specify a suitable 
lower and upper bounds for edges on the boundary when the solution from 
the outside of the computational domain is unavailable. Here we adjust the 
bounds using solutions from the entire stencil, i.e.
\[
m_j=\min\{u_0,u_{M_1},\cdots,u_{M_K}\},\quad 
M_j=\max\{u_0,u_{M_1},\cdots,u_{M_K}\},\quad 
j=1,2,\cdots,J.
\]
The resulting QP problem is still \eqref{eq:dic}, with a new definition of 
$\mathbf G$ and $\bm c$ 
\begin{align*}
\mathbf G=\sum_{i=1}^K \bm r_{M_i}\bm r_{M_i}^\top,\quad 
\bm c= \sum_{i=1}^K (u_0-u_{M_i})\bm r_{M_i},\quad 
\bm r_{M_i}=\bm x_{M_i}-\bm x_0,\quad 
i=1,2,\cdots,K.
\end{align*}	
Now the flux computation can be computed in the usual way. The boundary 
condition is imposed on the midpoint of the boundary edge. When periodic 
boundary condition is prescribed, no special treatment is needed.
\begin{figure}[htbp]
	\centering
	\subfloat[Von Neumann neighbors]
	{\includegraphics[width=.45\textwidth]{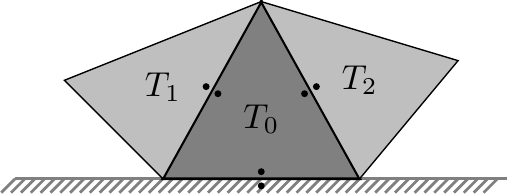}}
	\qquad
	\subfloat[Moore neighbors]
	{\includegraphics[width=.45\textwidth]{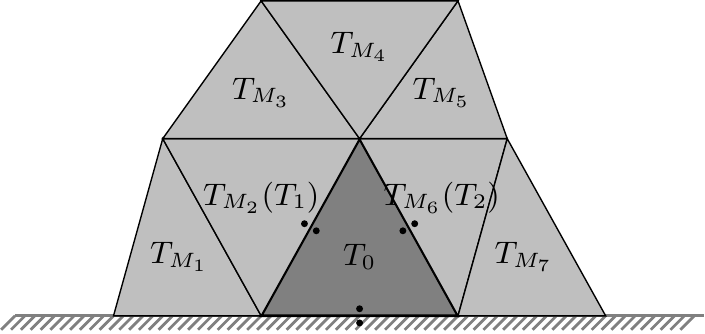}}
	\caption{Boundary treatment.}\label{fig:bctreat}
\end{figure}	

\section{Local maximum principle and positivity-preserving property}

\subsection{Local maximum principle}

A more appealing feature of our reconstruction is that the resulting finite 
volume scheme satisfies a \emph{local maximum principle}. We restrict 
ourselves to first-order Euler forward in time
\begin{equation}
u_0^{n+1}= u_0^n-\dfrac{\Delta t_n}{|T_0|}
\sum_{j=1}^J \mathcal F(u_{j}^-,u_{j}^+;\bm n_j)|e_j|.
\label{fvm:second}
\end{equation}
Second order SSP Runge-Kutta time discretization \eqref{eq:TVDRK2} will 
definitely keep the validity of the local maximum principle since it is a 
convex combination of two Euler forward steps. For clarity, the labels of 
edges are ordered in the way in Fig. \ref{fig:quadconfig}. Here we have the 
following theorem on local maximum principle. The technique to prove the 
local maximum principle is similar to that of \cite{Zhang2012}.
\begin{figure}[t]
	\centering
 	\subfloat[Triangle]
 	{\includegraphics[width=.3\textwidth]{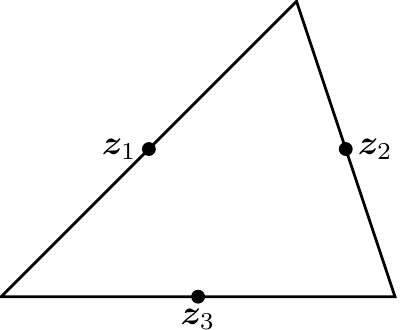}}
 	\qquad
 	\subfloat[Twin triangle]
 	{\includegraphics[width=.3\textwidth]{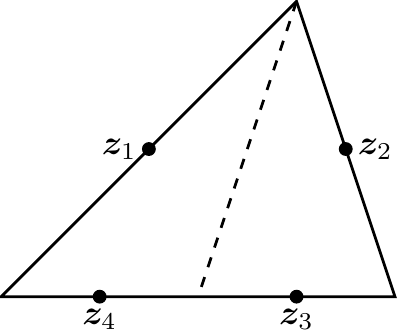}}
 	\caption{Labels of quadrature points for triangles and twin triangles.}
 	\label{fig:quadconfig}
\end{figure}
\begin{theorem}[Local maximum principle]\label{thm:lmp}
Suppose that $\mathcal F$ is a monotone Lipschitz continuous numerical 
flux function. Let $\mathcal T$ be a triangulation mixed with triangles and 
twin triangles. If for each cell $T_0\in\mathcal T$ the linear reconstruction 
satisfies
\[
\min\{u_0^n,u_j^n\}\le u_j^-\le \max\{u_0^n,u_j^n\},\quad 
j=1,2,\cdots,J,
\] 
then the finite volume scheme \eqref{fvm:second} fulfills the local maximum 
principle
\[
\min\{u_0^n,u_1^n,\cdots,u_J^n\}\le u_0^{n+1}
\le \max\{u_0^n,u_1^n,\cdots,u_J^n\},
\]
under the CFL-like condition
\[
\Delta t_n
\sup_{u^-,u^+,\bm n}
\left|\dfrac{\partial\mathcal F(u^-,u^+;\bm n)}{\partial u^+}\right|
\le \dfrac{1}{12}h,
\]
where $h$ is the minimum size of cells measured by their diameters of 
inscribed circles.
\end{theorem}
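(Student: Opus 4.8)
The plan is to reduce to a single forward-Euler step --- the SSP Runge--Kutta update \eqref{eq:TVDRK2} is a convex combination of two such steps, so it inherits any convex-hull bound --- and then to display $u_0^{n+1}$ as a \emph{monotone, consistent} function of the $2J$ one-sided traces $u_j^\pm$ at the edge midpoints $\bm z_1,\dots,\bm z_J$, each of which the hypothesis confines to $[\min\{u_0^n,u_j^n\},\max\{u_0^n,u_j^n\}]\subseteq[\min_i u_i^n,\max_i u_i^n]$. This is the Perthame--Shu / Zhang--Shu strategy; the new ingredients are a positive-weight midpoint quadrature that also covers the twin triangles and an elementary geometric estimate producing the constant $\tfrac1{12}$.

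\emph{Step 1 (a midpoint quadrature identity).} Since $\hat u_0$ is affine and $\bm x_0$ is the centroid of $T_0$, we have $u_0^n=\hat u_0(\bm x_0)$. I would first produce positive weights $w_1,\dots,w_J$ with $\sum_j w_j=1$ and $\sum_j w_j\bm z_j=\bm x_0$; affineness of $\hat u_0$ then gives $u_0^n=\sum_{j=1}^J w_j\,\hat u_0(\bm z_j)=\sum_{j=1}^J w_j\,u_j^+$. For a genuine triangle ($J=3$) the $\bm z_j$ are the vertices of the medial triangle and $w_j\equiv\tfrac13$; for a twin triangle regarded as a quadrilateral ($J=4$, with the two half-edges as edges $2,3$ say) a short computation in barycentric coordinates gives $w=(\tfrac13,\tfrac16,\tfrac16,\tfrac13)$. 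In all cases $w_j\ge\tfrac16>0$.

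\emph{Step 2 (the monotone building block and the CFL constant).} Substituting the identity of Step 1 into \eqref{fvm:second},
\[
u_0^{n+1}=\sum_{j=1}^J w_j\,u_j^+\;-\;\frac{\Delta t_n}{|T_0|}\sum_{j=1}^J|e_j|\,\mathcal F(u_j^-,u_j^+;\bm n_j)\;=:\;H(u_1^-,u_1^+,\dots,u_J^-,u_J^+),
\]
where $u_j^+=\hat u_0(\bm z_j)$ is the inside trace. Consistency $H(v,\dots,v)=v$ follows from $\sum_j w_j=1$, from $\mathcal F(v,v;\bm n)=\bm F(v)\cdot\bm n$, and from the geometric identity $\sum_{j=1}^J|e_j|\bm n_j=\bm 0$: this holds for a triangle, and also for a twin triangle, because its two half-edge normals coincide with the normal of the full split edge while the two half-lengths add up to the full length. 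For monotonicity, $H$ is nondecreasing in each outside trace $u_j^-$ by the monotonicity of $\mathcal F$, while
\[
\frac{\partial H}{\partial u_j^+}=w_j-\frac{\Delta t_n\,|e_j|}{|T_0|}\,\frac{\partial\mathcal F}{\partial u^+}
\]
is nonnegative provided $\Delta t_n\,|\partial\mathcal F/\partial u^+|\le w_j|T_0|/|e_j|$. Here the geometry enters: for any edge of a triangle $T_0$ one has $|T_0|=r_{T_0}s_{T_0}$ with $s_{T_0}$ the semiperimeter and $|e_j|\le s_{T_0}$ by the triangle inequality (a half-edge even gives $|e_j|\le\tfrac12 s_{T_0}$), so $|T_0|/|e_j|\ge r_{T_0}\ge h/2$, and with $w_j\ge\tfrac16$ we get $w_j|T_0|/|e_j|\ge\tfrac1{12}h$. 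Thus the stated CFL-like condition makes $H$ monotone in all $2J$ arguments.

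\emph{Step 3 (conclusion and the main obstacle).} A monotone consistent map takes values between the minimum and the maximum of its arguments, so $u_0^{n+1}=H$ lies between the smallest and the largest of the traces $u_j^\pm$. By hypothesis, applied to $T_0$ and to each of its neighbours --- legitimate precisely because the reconstruction constraints are imposed at the quadrature points and a twin triangle is treated as a quadrilateral, so that every $\bm z_j$ is a common edge midpoint of the two incident cells --- all of these traces lie in $[\min\{u_0^n,u_j^n\},\max\{u_0^n,u_j^n\}]\subseteq[\min_i u_i^n,\max_i u_i^n]$. Hence $\min\{u_0^n,u_1^n,\dots,u_J^n\}\le u_0^{n+1}\le\max\{u_0^n,u_1^n,\dots,u_J^n\}$. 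The step I expect to take the most care is the twin-triangle bookkeeping running through Steps 1--3: confirming the four quadrature weights, checking that $\sum_j|e_j|\bm n_j=\bm 0$ survives the edge splitting, and above all verifying that every flux-evaluation point is genuinely a shared edge midpoint, so that the hypothesis really does control both one-sided traces there; everything else is the classical convex-combination-of-monotone-consistent-blocks argument together with the estimate $|T_0|/|e_j|\ge r_{T_0}$.
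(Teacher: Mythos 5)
Your proof is correct and recovers the stated constant $\tfrac1{12}h$, but it takes a genuinely more direct route than the paper at the decisive step. Both arguments begin with the same positive-weight midpoint quadrature identity (weights $\tfrac13$ for a triangle and $\{\tfrac13,\tfrac13,\tfrac16,\tfrac16\}$ for a twin triangle, which is exactly the paper's formula \eqref{eq:linquad}) and both end with the ``monotone consistent map of the traces'' conclusion. The difference is in how monotonicity is obtained: the paper follows the Zhang--Xia--Shu device of rewriting $u_0^{n+1}$ as a convex combination $\tfrac13(H_1+H_2+H_3)$, resp.\ $\tfrac16(2H_1+2H_2+H_3+H_4)$, of auxiliary one-dimensional-looking schemes $H_j$ built by inserting internal fluxes such as $\mathcal F(u_1^-,u_2^-;\bm n_2)$ and $\mathcal F(u_2^-,u_1^-;-\bm n_2)$ that cancel pairwise by conservativity of $\mathcal F$, and then checks each $H_j$ separately; you instead differentiate the un-split update $H$ directly, letting the quadrature weight $w_j$ absorb the single flux derivative attached to the inside trace at $\bm z_j$. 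For the scalar monotone-flux case your version is cleaner and dispenses with the internal-flux bookkeeping; the paper's splitting is the form that survives in the positivity proof of Theorem~\ref{thm:pp}, where one cannot differentiate. One caveat to fix: you have inverted the paper's sign convention --- in the paper $u_j^-=\hat u_0(\bm z_j)$ is the \emph{inside} trace (the theorem's hypothesis is stated for it) and $u_j^+$ is the outside one, so your inside-trace derivative is really $\partial\mathcal F/\partial u^-$; under the paper's convention the CFL bound on $\sup|\partial\mathcal F/\partial u^+|$ controls that derivative only through the conservativity identity $\partial_1\mathcal F(a,b;\bm n)=-\partial_2\mathcal F(b,a;-\bm n)$, a step the paper's own proof also leaves implicit. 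Your remaining worry about the twin triangles resolves exactly as you anticipate: each half-edge midpoint of the coarse cell is the full-edge midpoint of one of its finer neighbours, so the hypothesis applied to every incident cell bounds both one-sided traces at each quadrature point.
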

\begin{proof}
From Fig. \ref{fig:quadconfig} we have the following formula
\begin{equation}
\dfrac{1}{|T_0|}\int_{T_0}\hat u_0(\bm x)\mathrm d\bm x
=\begin{cases}
\dfrac{1}{3} \hat u_0(\bm z_1)+\dfrac{1}{3} \hat u_0(\bm z_2)
+\dfrac{1}{3} \hat u_0(\bm z_3),
& T_0 \text{~is a triangle},\\
\dfrac{1}{3}\hat u_0(\bm z_1)+\dfrac{1}{3}\hat u_0(\bm z_2)+
\dfrac{1}{6}\hat u_0(\bm z_3)+\dfrac{1}{6}\hat u_0(\bm z_4),
& T_0 \text{~is a twin triangle}.
\end{cases}
\label{eq:linquad}
\end{equation}
As a result, the finite volume scheme \eqref{fvm:second} can be split as
\begin{equation}
\begin{split}
u_0^{n+1}&= u_0^n-\dfrac{\Delta t_n}{|T_0|}
\sum_{j=1}^3 \mathcal F(u_{j}^-,u_{j}^+;\bm n_j)|e_j| \\
&=\dfrac{1}{3}(u_1^-+u_2^-+u_3^-)
-\dfrac{\Delta t_n}{|T_0|}
\sum_{j=1}^3 \mathcal F(u_{j}^-,u_{j}^+;\bm n_j)|e_j| \\
&=\dfrac{1}{3} (H_{1}+H_{2}+H_{3}),
\end{split}
\label{eq:scheme1}
\end{equation}
with
\begin{equation*}
\begin{split}
H_{1}&=u_{1}^--\dfrac{3\Delta t_n}{|T_0|}
\left[\mathcal F(u_{1}^-,u_{1}^+;\bm n_1)|e_1|+
\mathcal F(u_{1}^-,u_{2}^-;\bm n_2)|e_2|+
\mathcal F(u_{1}^-,u_{3}^-;\bm n_3)|e_3|\right],\\
H_{2}&=u_{2}^--\dfrac{3\Delta t_n}{|T_0|}
\left[\mathcal F(u_{2}^-,u_{2}^+;\bm n_2)+
\mathcal F(u_{2}^-,u_{1}^-;-\bm n_2)\right]|e_2|,\\
H_{3}&=u_{3}^--\dfrac{3\Delta t_n}{|T_0|}
\left[\mathcal F(u_{3}^-,u_{3}^+;\bm n_3)+
\mathcal F(u_{3}^-,u_{1}^-;-\bm n_3)\right]|e_3|,
\end{split}
\end{equation*} 
if $T_0$ is a triangle, or
\begin{equation}
  \begin{split}
    u_0^{n+1}&= u_0^n-\dfrac{\Delta t_n}{|T_0|}
    \sum_{j=1}^4 \mathcal F(u_{j}^-,u_{j}^+;\bm n_j)|e_j| \\
    &=\dfrac{1}{6}(2u_1^-+2u_2^-+u_3^-+u_4^-)
    -\dfrac{\Delta t_n}{|T_0|}
    \sum_{j=1}^4 \mathcal F(u_{j}^-,u_{j}^+;\bm n_j)|e_j| \\
    &=\dfrac{1}{6} (2H_{1}+2H_{2}+H_{3}+H_{4}),
  \end{split}
\label{eq:scheme2}
\end{equation}
with
\begin{equation*}
   \begin{split}
    H_{1}&=u_{1}^--\dfrac{3\Delta t_n}{|T_0|}
    \big[\mathcal F(u_{1}^-,u_{1}^+;\bm n_1)|e_1|+
    \mathcal F(u_{1}^-,u_{2}^-;\bm n_2)|e_2|\\
    &+\mathcal F(u_{1}^-,u_{3}^-;\bm n_3)|e_3|+
    \mathcal F(u_{1}^-,u_{4}^-;\bm n_3)|e_4|\big],\\
    H_{2}&=u_{2}^--\dfrac{3\Delta t_n}{|T_0|}
    \left[\mathcal F(u_{2}^-,u_{2}^+;\bm n_2)+
    \mathcal F(u_{2}^-,u_{1}^-;-\bm n_2)\right]|e_2|,\\
    H_{3}&=u_{3}^--\dfrac{6\Delta t_n}{|T_0|}
    \left[\mathcal F(u_{3}^-,u_{3}^+;\bm n_3)+
    \mathcal F(u_{3}^-,u_{1}^-;-\bm n_3)\right]|e_3|, \\
    H_{4}&=u_{4}^--\dfrac{6\Delta t_n}{|T_0|}
    \left[\mathcal F(u_{4}^-,u_{4}^+;\bm n_3)+
    \mathcal F(u_{4}^-,u_{1}^-;-\bm n_3)\right]|e_4|,
  \end{split}
\end{equation*} 
if $T_0$ is a twin triangle.

It follows that the derivatives of $H_1,\cdots,H_J$ with respect to their 
arguments $u_1^\pm,\cdots,u_J^\pm$ are all non-negative provided that
\[
\dfrac{\Delta t_n}{|T_0|}
\sup_{u^-,u^+,\bm n}
\left|\dfrac{\partial\mathcal F(u^-,u^+;\bm n)}{\partial u^+}\right|
\cdot
\sum_{j=1}^J |e_j|
\le \dfrac{1}{3},
\]
or, using the definition of $h$,
\[
\Delta t_n
\sup_{u^-,u^+,\bm n}
\left|\dfrac{\partial\mathcal F(u^-,u^+;\bm n)}{\partial u^+}\right|
\le \dfrac{1}{12}h.
\]
Therefore, if we view the right-hand side of the scheme \eqref{eq:scheme1}
or \eqref{eq:scheme2} as a function of $u_{1}^\pm,\cdots,u_{J}^\pm$ 
\[
u_0^{n+1}=H(u_{1}^-,\cdots,u_{J}^-,
u_{1}^+,\cdots,u_{J}^+),
\]
then the function $H$ is non-decreasing with respect to each argument.
Moreover, we have $H(u,\cdots,u)=u$ due to the consistency of numerical 
flux function. 

Denote 
\[
u^{\min}=\min\{u_0^n,u_1^n,\cdots,u_J^n\},\quad 
u^{\max}=\max\{u_0^n,u_1^n,\cdots,u_J^n\},
\]
then the trace values satisfy the inequalities
\[
 u^{\min}\le \min\{u_0,u_j\}\le u_{j}^\pm 
 \le \max\{u_0,u_j\}\le u^{\max},\quad 
 j=1,2,\cdots,J.
\]
Consequently the monotonicity of $H$ implies the local maximum principle 
\[
u^{\min}=H(u^{\min},\cdots,u^{\min})\le u_0^{n+1}\le 
H(u^{\max},\cdots,u^{\max})=u^{\max}.
\]
\end{proof}

\begin{remark}
  In Theorem \ref{thm:lmp}, we focus on the case with triangular meshes 
  generated by the adaptive mesh refinement.  Indeed the local maximum 
  principle is valid on any polygon meshes, as long as a quadrature formula 
  like \eqref{eq:linquad} can be found.
\end{remark}

\subsection{Positivity-preserving property}

In this part we extend the previous framework of scalar conservation laws to 
the system of the Euler equations of ideal gases:
\[
\dfrac{\partial \bm u}{\partial t}
+\nabla\cdot\bm F(\bm u)=\bm 0,
\]
\[
\bm u=
\begin{pmatrix}
\rho \\ \rho u \\ \rho v \\ E
\end{pmatrix},\quad
\bm F(\bm u)=
\begin{pmatrix}
\rho u & \rho v \\
\rho u^2+p & \rho uv \\
\rho uv    & \rho v^2+p \\
u(E+p)    & v(E+p)
\end{pmatrix},\quad 
p=(\gamma-1)
\left(E-\dfrac{1}{2}\rho|\bm v|^2\right),
\]
where $\rho$ is the density, $\bm v=(u,v)$ is the velocity, $E$ is the total 
energy, $p$ is the pressure and $\gamma>1$ is a constant.

The  local maximum principle is no longer applicable for system of
conservation laws. However, the positivity-preserving property is desired in 
the computation of the Euler equations. It means that the solution $\bm u$ 
belongs to the \emph{set of admissible states}
\[
\mathcal G=
\left\{
[\rho,~m,~n,~E]^\top:
\rho > 0\text{~and~}p=(\gamma-1)\left(E-\dfrac{m^2+n^2}{2\rho}\right)>0
\right\}.
\]

To simplify the discussion we consider the finite volume scheme
\begin{equation}
\bm u_0^{n+1}= \bm u_0^n-\dfrac{\Delta t_n}{|T_0|}
\sum_{j=1}^J \mathcal F(\bm u_{j}^-,\bm u_{j}^+;\bm n_j)|e_j|,
\label{fvm:Euler}
\end{equation}
with \emph{local Lax-Friedrichs flux}
\begin{equation}
\mathcal F(\bm u_j^-,\bm u_j^+;\bm n_j) = \dfrac12 
[\bm F(\bm u^-_j) + \bm F(\bm u^+_j)] \cdot \bm n_j - 
\dfrac12 a(\bm u^+_j-\bm u^-_j),\quad
a=\||\bm v|+c\|_\infty,
\label{eq:llf}
\end{equation}
where $c=\sqrt{{\gamma p}/{\rho}}$ is the speed of sound.

To guarantee positivity, we perform the reconstruction with respect to the 
\emph{primitive} variables $\rho,u,v$ and $p$ respectively. The following 
theorem states that the finite volume scheme will preserve the positivity of 
numerical solutions.
\begin{theorem}[Positivity-preserving]\label{thm:pp}
  Let $\mathcal T$ be a triangulation mixed with triangles and twin 
  triangles. Labels of edges are illustrated in Fig. \ref{fig:quadconfig}. 
  Suppose that for each cell $T_0\in\mathcal T$ the linear reconstruction 
  with respect to the primitive variables satisfies 
  $\bm u_j^-\in\mathcal G$ for $j=1,2,\cdots,J$. Then the finite volume 
  scheme \eqref{fvm:Euler} with local Lax-Friedrichs flux \eqref{eq:llf} 
  satisfies $\bm u_0^{n+1}\in\mathcal G$ under the CFL condition
  \begin{equation}
  	a\Delta t_n\le \dfrac{1}{2}\beta h,
  	\label{eq:cflbeta}
  \end{equation}
  where $h$ has the same meaning as Theorem \eqref{thm:lmp} and
  $0< \beta \le 1$ is sufficiently small such that 
  \[
  \bm u_0^n-\beta (\bm u_1^-+\bm u_2^-+\bm u_3^-)\in \mathcal G,
  \]
  for any triangle $T_0$ and
  \[
  \bm u_0^n-\beta \left(\bm u_1^-+\bm u_2^-+\dfrac{1}{2}
  \bm u_3^-+\dfrac{1}{2}\bm u_4^-\right)\in \mathcal G,
  \] 
  for any twin triangle $T_0$.
\end{theorem}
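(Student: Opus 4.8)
The plan is to follow the architecture of the proof of Theorem~\ref{thm:lmp}, with the admissible set $\mathcal G$ in the role of the interval $[u^{\min},u^{\max}]$ and with the fact that $\mathcal G$ is a \emph{convex cone} replacing the monotonicity of the scalar update. I would first isolate two structural facts. First, $\mathcal G$ is a convex cone: it is conic because $\rho$ and $p$ are positively homogeneous of degree one in the conservative variables, and convex because $E-\frac{m^2+n^2}{2\rho}$ is concave on $\{\rho>0\}$, so $\mathcal G$ is the intersection of a half-space with a superlevel set of a concave function; consequently any finite sum of elements of $\mathcal G$, and any positive multiple of an element of $\mathcal G$, again lies in $\mathcal G$. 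Second, the Lax--Friedrichs splitting property: if $\bm w\in\mathcal G$ and $a\ge|\bm v\cdot\bm n|+c$ at the state $\bm w$ — true for $a=\||\bm v|+c\|_\infty$, which majorizes the signal speed on every edge — then $\bm w-\frac1a\bm F(\bm w)\cdot\bm n\in\mathcal G$; positivity of density is immediate from $|\bm v\cdot\bm n|<a$, while positivity of pressure is the classical estimate of Perthame--Shu~\cite{Perthame1996} and Zhang--Shu~\cite{Zhang2012}, which I would cite rather than rederive.

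I would then introduce the decomposition hinted at by the hypotheses. Write $w_j$ for the weight multiplying $\bm u_j^-$ in the condition on $\beta$, i.e. $w=(1,1,1)$ for a triangle and $w=(1,1,\tfrac12,\tfrac12)$ for a twin triangle (equivalently $w_j=3q_j$, with $q_j$ the quadrature weights in \eqref{eq:linquad}). Adding and subtracting $\beta\sum_j w_j\bm u_j^-$ in \eqref{fvm:Euler} yields
\[
\bm u_0^{n+1}=\Bigl(\bm u_0^n-\beta\sum_{j=1}^J w_j\bm u_j^-\Bigr)
+\sum_{j=1}^J\Bigl(\beta w_j\bm u_j^--\frac{\Delta t_n|e_j|}{|T_0|}\,\mathcal F(\bm u_j^-,\bm u_j^+;\bm n_j)\Bigr).
\]
The first parenthesis is in $\mathcal G$ precisely by the assumption placed on $\beta$ (such $\beta$ exist whenever $\bm u_0^n\in\mathcal G$, since $\mathcal G$ is open), and each outer trace $\bm u_j^+$ is in $\mathcal G$ as well, being an inner trace of the neighbouring cell. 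For each edge term I would insert the local Lax--Friedrichs flux \eqref{eq:llf} and regroup the $\bm u_j^-$ and $\bm u_j^+$ parts; a short computation gives
\[
\begin{aligned}
\beta w_j\bm u_j^--\frac{\Delta t_n|e_j|}{|T_0|}\,\mathcal F(\bm u_j^-,\bm u_j^+;\bm n_j)
={}&\Bigl(\beta w_j-\frac{a\Delta t_n|e_j|}{|T_0|}\Bigr)\bm u_j^-\\
&+\frac{a\Delta t_n|e_j|}{2|T_0|}\Bigl(\bm u_j^--\tfrac1a\bm F(\bm u_j^-)\cdot\bm n_j\Bigr)
+\frac{a\Delta t_n|e_j|}{2|T_0|}\Bigl(\bm u_j^+-\tfrac1a\bm F(\bm u_j^+)\cdot\bm n_j\Bigr).
\end{aligned}
\]
By the splitting property the last two vectors lie in $\mathcal G$ with strictly positive coefficients, so the whole problem reduces to verifying $\beta w_j-\frac{a\Delta t_n|e_j|}{|T_0|}\ge 0$.

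This last point is where the CFL condition \eqref{eq:cflbeta} and the mesh geometry enter, and it is the step I expect to require the most care. For a triangle or twin triangle $T_0$ with inradius $r_0$ and semi-perimeter $s_0$ one has $|T_0|=r_0 s_0$ and $2r_0\ge h$ by the definition of $h$. A brief case check shows $w_j|T_0|/|e_j|>\tfrac12 h$ for every edge: if $e_j$ is a full edge then $w_j=1$ and $|e_j|<s_0$ by the strict triangle inequality, so $w_j|T_0|/|e_j|=r_0 s_0/|e_j|>r_0\ge\tfrac12 h$; if $e_j$ is one of the two half-edges of a twin triangle then $w_j=\tfrac12$ and $|e_j|$ is half of a genuine edge $\ell<s_0$, so $w_j|T_0|/|e_j|=r_0 s_0/\ell>r_0\ge\tfrac12 h$. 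Together with \eqref{eq:cflbeta} this gives $a\Delta t_n\le\tfrac12\beta h<\beta\,w_j|T_0|/|e_j|$, i.e. $\beta w_j-\frac{a\Delta t_n|e_j|}{|T_0|}>0$, for every $j$. Each edge term is then a positive combination of elements of $\mathcal G$, hence in $\mathcal G$; summing over $j$ and adding the first parenthesis shows $\bm u_0^{n+1}$ is a finite sum of elements of $\mathcal G$, so $\bm u_0^{n+1}\in\mathcal G$. As in Theorem~\ref{thm:lmp}, the full SSP Runge--Kutta step \eqref{eq:TVDRK2} then inherits the property, being a convex combination of such forward-Euler updates. The two genuine obstacles are (i) choosing the $\beta$-shifted decomposition with the correct weights $w_j$ so that the geometric estimate closes under exactly the stated CFL, and (ii) the Lax--Friedrichs splitting lemma of the first paragraph, the only non-elementary analytic ingredient; the remainder is bookkeeping, the twin triangle being the one place where the half-edges must be tracked with care.
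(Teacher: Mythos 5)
Your proof is correct, and it reaches the conclusion by a genuinely different decomposition than the one in the paper, although both arguments rest on the same two pillars: that $\mathcal G$ is a convex cone, and that the Lax--Friedrichs split states $\bm u\pm\frac1a\bm F(\bm u)\cdot\bm n$ remain in $\mathcal G$. The paper keeps $\bm u_0^n$ intact, subtracts $\lambda(|e_1|+|e_2|+|e_3|)(\bm u_1^-+\bm u_2^-+\bm u_3^-)$ with $\lambda=a\Delta t_n/2|T_0|$, and uses the geometric identity $\sum_j|e_j|\bm n_j=\bm 0$ to convert the outgoing flux of $\bm u_1^-$ through $e_1$ into incoming contributions through $e_2$ and $e_3$; the leftover coefficients $\lambda(|e_1|+|e_3|-|e_2|)$ and $\lambda(|e_1|+|e_2|-|e_3|)$ are nonnegative by the triangle inequality, and the CFL condition enters only through $\lambda(|e_1|+|e_2|+|e_3|)\le\beta$, which activates the hypothesis on $\beta$. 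You instead add and subtract $\beta\sum_j w_j\bm u_j^-$ with the quadrature-proportional weights $w_j=3q_j$ and absorb each edge flux separately, so that the sign condition to check is $\beta w_j\ge a\Delta t_n|e_j|/|T_0|$, which you close with the inradius estimate $w_j|T_0|/|e_j|>r_0\ge h/2$. Your version buys a uniform, per-edge argument that treats the triangle and the twin triangle on exactly the same footing (the paper only writes out the triangle case and asserts the other is similar), and it makes the role of the constant $\tfrac12$ in \eqref{eq:cflbeta} geometrically transparent; the paper's version avoids any inradius computation for the leftover coefficients by trading it for the triangle inequality and the identity $\sum_j|e_j|\bm n_j=\bm 0$. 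The one ingredient you cite rather than derive, the positivity of density and pressure for $\bm u\pm\frac1a\bm F(\bm u)\cdot\bm n$, is worked out explicitly at the end of the paper's proof; since the reference \cite{Perthame1996,Zhang2012} covers precisely this computation, that is a legitimate shortcut, not a gap.
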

\begin{proof}
  We only prove the case with triangle here. The case with twin triangle can 
  be carried out in a similar manner. Substituting \eqref{eq:llf} into 
  \eqref{fvm:Euler} and rearranging terms yield
  \begin{equation}
  	\begin{split}
  		\bm u_0^{n+1}=&~\bm u_0^n -
  		\dfrac{\Delta t_n}{|T_0|}
  		\left[\mathcal F(\bm u_{1}^-,\bm u_{1}^+;\bm n_1)|e_1|+
  		\mathcal F(\bm u_{2}^-,\bm u_{2}^+;\bm n_2)|e_2|+
  		\mathcal F(\bm u_{2}^-,\bm u_{3}^+;\bm n_3)|e_3|\right] \\
  		=&~ \bm u_0^n-\lambda(|e_1|+|e_2|+|e_3|)
  		(\bm u_1^-+\bm u_2^-+\bm u_3^-)
  		+\lambda(|e_1|+|e_3|-|e_2|)\bm u_2^- \\&
  		+ \lambda(|e_1|+|e_2|-|e_3|)\bm u_3^-
  		+\lambda|e_1|\left(\bm u_1^+-\dfrac{1}{a}
  		\bm F(\bm u_1^+)\cdot\bm n_1\right)\\&
  		+\lambda|e_2|\left(\bm u_2^--\dfrac{1}{a}
  		\bm F(\bm u_2^-)\cdot\bm n_2\right)
  		+\lambda|e_3|\left(\bm u_3^--\dfrac{1}{a}
  		\bm F(\bm u_3^-)\cdot\bm n_3\right)\\
  		&+\lambda |e_2|\left(\bm u_2^+-\dfrac{1}{a}
  		\bm F(\bm u_2^+)\cdot \bm n_2\right)
  		+ \lambda |e_2|\left(\bm u_1^-+\dfrac{1}{a}
  		\bm F(\bm u_1^-)\cdot \bm n_2\right)\\
  		&+\lambda |e_3|\left(\bm u_3^+-\dfrac{1}{a}
  		\bm F(\bm u_3^+)\cdot \bm n_3\right)
  		+ \lambda |e_3|\left(\bm u_1^-+\dfrac{1}{a}
  		\bm F(\bm u_1^-)\cdot \bm n_3\right),
  	\end{split}
  	\label{eq:ppsplit}
  \end{equation}
  where $\lambda = a\Delta t_n/2|T_0|$.
  
  It can be verified that the set $\mathcal G$ is a \emph{convex cone}. 
  Note that $\lambda(|e_1|+|e_2|+|e_3|)={2a\Delta t_n}/{h}\le \beta$.
  Since $\bm u_1^{-},\bm u_2^-,\bm u_3^-\in\mathcal G$ and 
  $\bm u_0^n-\lambda(|e_1|+|e_2|+|e_3|)
  (\bm u_1^-+\bm u_2^-+\bm u_3^-)\in\mathcal G$,
  to guarantee $\bm u_0^{n+1}\in\mathcal G$ it suffices to verify that 
  \[
  \bm u \pm \dfrac{1}{a} \bm F(\bm u)\cdot\bm n\in\mathcal G,
  \]
  for any $\bm u\in\mathcal G$ and normal vector $\bm n$, that is, to 
  check the positivity of the \emph{density} and \emph{pressure}. Actually,
  \begin{align*}
  	\rho\left(\bm u\pm\dfrac{1}{a}\bm F(\bm u)\cdot\bm n\right)
  	&=
  	\dfrac{\rho}{a}(a\pm\bm v\cdot\bm n)>0,\\
  	p\left(\bm u\pm\dfrac{1}{a}\bm F(\bm u)\cdot\bm n\right)
  	&=
  	\dfrac{
  		(\gamma+1)(a\pm \bm v\cdot\bm n)^2
  		+(\gamma-1)[(a\pm \bm v\cdot\bm n)^2-c^2]
  	}
  	{2\gamma a(a\pm \bm v\cdot\bm n)}p>0,
  \end{align*}
  where $\rho(\bm u)=\rho$ and $p(\bm u)=(\gamma-1)
  \left(E-\dfrac{m^2+n^2}{2\rho}\right)$ for 
  $\bm u=[\rho,~m,~n,~E]^\top$.
\end{proof}

\begin{remark}
  Since $\bm u_1^-,\bm u_2^-,\cdots,\bm u_J^-$ are close to the cell 
  average $\bm u_0^n$, we can expect that the value of the constant 
  $\beta$ in \eqref{eq:cflbeta} is close to $1/3$.
\end{remark}

\begin{remark}
  Assume that at $n$-th time level $\bm u_h^n\in\mathcal G$. Then for 
  any cell $T_0\in\mathcal T$, the primitive variables $\rho_0^n$ and 
  $p_0^n$ must be positive. The constraints of ILR guarantee the positivity 
  of trace values $\rho_j^{-}$ and $p_j^{-}$ for all $j$. As a result, 
  $\bm u_j^-\in\mathcal G$ for all $j$. From Theorem \ref{thm:pp} we 
  conclude that $\bm u_0^{n+1}\in\mathcal G$ and hence at $(n+1)$-th 
  time level $\bm u_h^{n+1}\in\mathcal G$. Therefore, the ILR for the 
  Euler equations leads to positive numerical solutions as long as the 
  initial solution is positive.
\end{remark}

\section{Numerical results}

In this section we present several numerical examples to demonstrate the 
numerical performances of our integrated linear reconstruction on various 
unstructured grids. 

\subsection{Linear advection problem}

We consider the linear advection equation
\[
u_t+u_x+2u_y=0,
\]
with initial profile given by the double sine wave function 
\cite{Hubbard1999, Park2010, May2013, Chen2016}
\[
u_0(x,y)=\sin(2\pi x)\sin(2\pi y).
\]

The computational domain is $[0,1]\times[0,1]$. The periodic boundary 
condition is applied. We perform the convergence test on both uniform and 
non-uniform meshes (see Fig. \ref{fig:tri} for the coarse mesh). The uniform 
mesh is generated by dividing rectangular cells along the diagonal direction,
while the non-uniform mesh is generated by the Delaunay triangulation of
EasyMesh \cite{EasyMesh}. The upwind flux is adopted as the numerical flux and
the CFL number is $0.3$. We present quantitative comparisons of solution errors
as well as the reconstruction cost until $t=1$ in Tables \ref{tab:sinewavere}
and \ref{tab:sinewaveir}, where we include the MLP limiter (version u1)
\cite{Park2010} as a representative example of limiters involving the Moore
neighbors. It is found that even though MLP is slightly better than our ILR,
they behave quite similar in the accuracy and efficiency. As a comparison,
Barth's limiter has the advantage on the efficiency, but it only gives rise to
first-order accuracy. Moreover, it was observed in this numerical test that the
iteration of QP solver converges in roughly two steps on average, which
confirms the excellent performance of ILR.

\begin{figure}[htbp]
	\centering
	\subfloat[Uniform mesh ($8\times 8\times 2$ cells)]
	{\includegraphics[width=.35\textwidth]{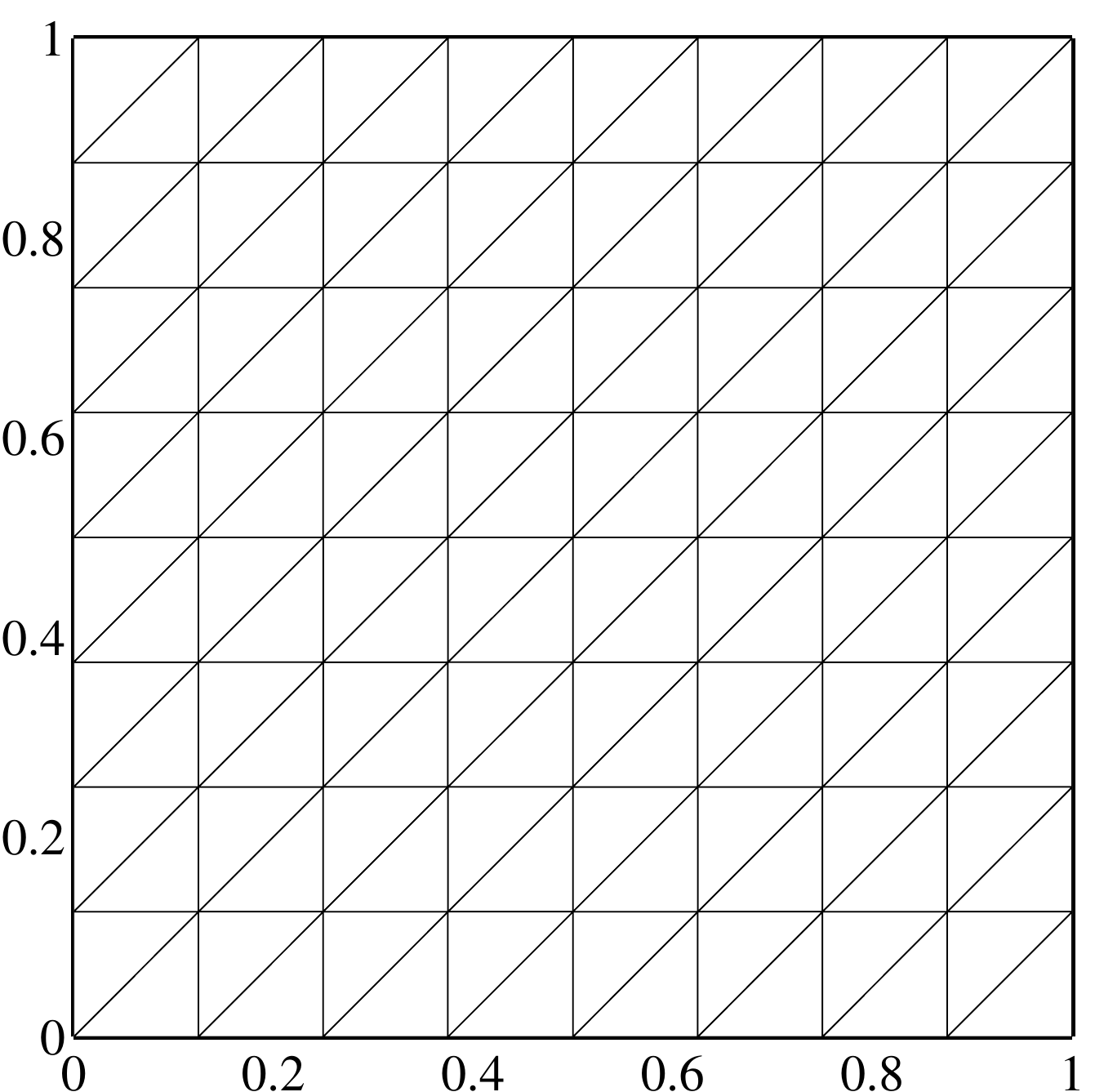}}
	\qquad
	\subfloat[Non-uniform mesh (64 cells)]
	{\includegraphics[width=.35\textwidth]{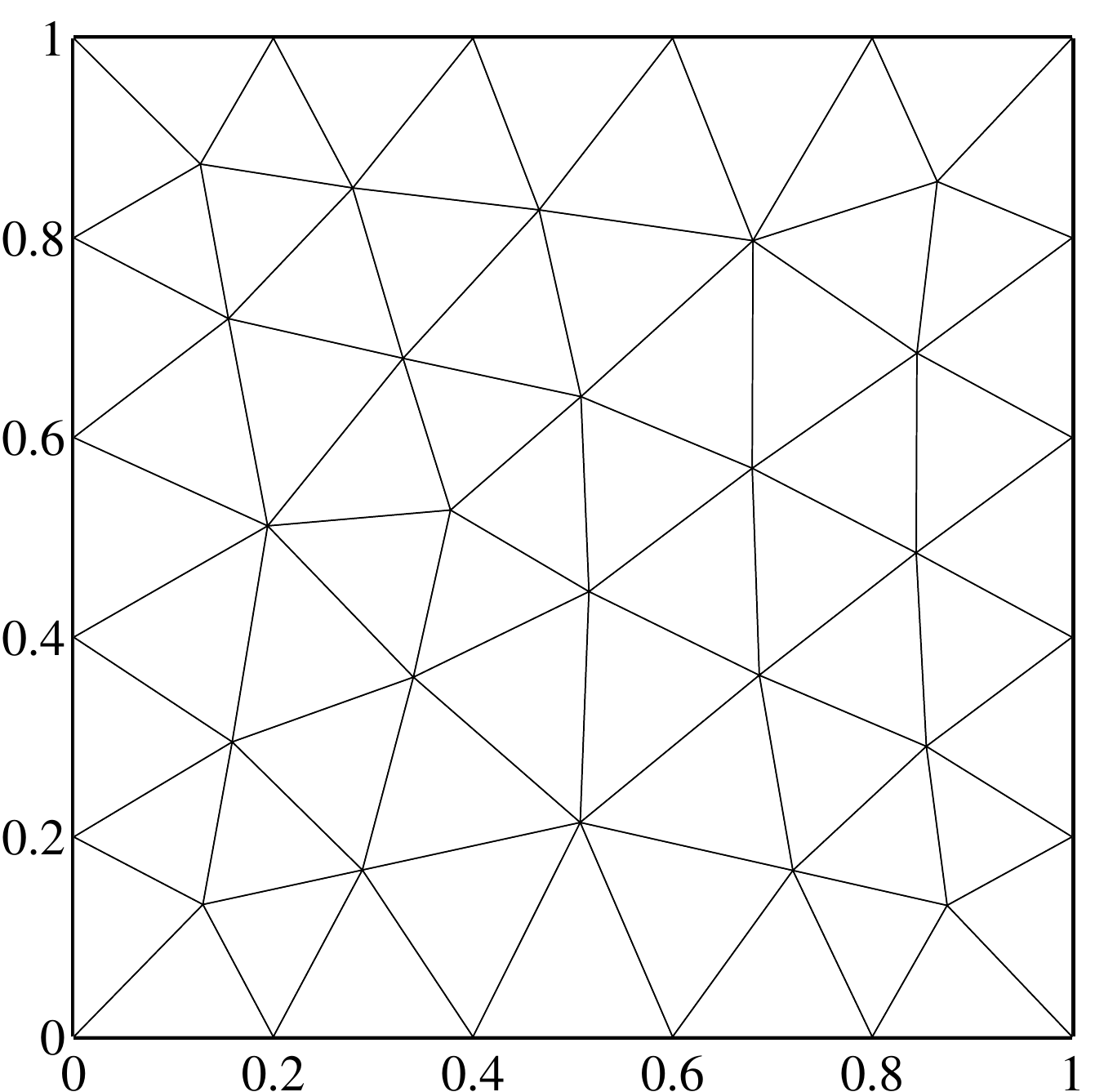}}
	\caption{Coarse meshes used in the convergence test.}
	\label{fig:tri}
\end{figure}

\begin{table}[htbp]
   \centering
   \caption{Convergence test for the advection of double sine wave: uniform 
   	            meshes.}\label{tab:sinewavere}
	\begin{tabular}{llllllr}
     \toprule 
     & Cells & $L^1$ error & Order &
     $L^\infty$ error & Order & Reconstruction cost (s) \\ \midrule
     ILR &
     16$\times$16$\times$2 &  4.06e-2 & 2.33  & 1.71e-1 & 1.45  & 1 \\ &
     32$\times$32$\times$2 & 1.16e-2 & 1.81  & 6.35e-2 & 1.43  & 7 \\ &
     64$\times$64$\times$2 & 3.28e-3 & 1.82  & 3.00e-2 & 1.08  & 64 \\ &
     128$\times$128$\times$2 & 8.61e-4 & 1.93  & 1.23e-2 & 1.28  & 444 \\ \midrule
     MLP & 16$\times$16$\times$2 & 6.19e-2 & 1.96  & 2.54e-1 & 1.13  & 1 \\ &
     32$\times$32$\times$2 & 1.32e-2 & 2.23  & 9.70e-2 & 1.39  & 6 \\ &
     64$\times$64$\times$2 & 2.69e-3 & 2.29  & 3.45e-2 & 1.49  & 51 \\ &
     128$\times$128$\times$2 & 5.71e-4 & 2.24  & 1.19e-2 & 1.54  & 392 \\ \midrule
     Barth's limiter & 16$\times$16$\times$2& 1.36e-1 & 1.22  & 4.25e-1 & 0.82  & 1 \\ &
     32$\times$32$\times$2 & 7.26e-2 & 0.90  & 2.65e-1 & 0.68  & 3 \\ &
     64$\times$64$\times$2 & 3.46e-2 & 1.07  & 1.40e-1 & 0.92  & 29 \\ &
     128$\times$128$\times$2 & 1.82e-2 & 0.93  & 9.93e-2 & 0.50  & 231 \\ \midrule
     Unlimited & 16$\times$16$\times$2 & 3.40e-2 & 2.17  & 7.51e-2 & 2.01  & 1 \\ &
     32$\times$32$\times$2 & 7.63e-3 & 2.16  & 1.69e-2 & 2.15  & 3\\ &
     64$\times$64$\times$2 & 1.85e-3 & 2.05  & 3.94e-3 & 2.10  & 27 \\ &
     128$\times$128$\times$2 & 4.58e-4 & 2.01  & 9.48e-4 & 2.05  & 203 \\ \bottomrule
	\end{tabular}
\medskip
\caption{Convergence test for the advection of double sine wave:   
	         non-uniform meshes.}\label{tab:sinewaveir}
	\begin{tabular}{llllllr}
	\toprule 
	& Cells & $L^1$ error & Order &
	$L^\infty$ error & Order & Reconstruction cost (s) \\ \midrule 
	ILR & 1024\hspace{3em} & 1.65e-2 & 2.11 & 1.01e-1 & 1.58 & 3 \\ 
	& 4096 & 4.30e-3 & 1.94 & 4.47e-2 & 1.17 & 24 \\
	& 16384 & 1.12e-3 & 1.95 & 1.73e-2 & 1.37 & 178 \\ 
	& 66536 & 2.97e-4 & 1.91 & 6.89e-3 & 1.33 & 1366 \\ \midrule 
	MLP & 1024 & 2.07e-2 & 2.15 & 1.41e-1 & 1.42 & 3 \\ 
	& 4096 & 4.38e-3 & 2.24 & 5.12e-2 & 1.46 & 22 \\ 
	& 16384 & 9.13e-4 & 2.26 & 1.77e-2 & 1.53 & 167 \\ 
	& 66536 & 2.01e-4 & 2.18 & 6.05e-3 & 1.55 & 1311 \\ \midrule 
	Barth's limiter & 1024 & 9.85e-2 & 1.11 & 3.34e-1 & 0.82 & 1 \\ 
	& 4096 & 4.59e-2 & 1.10 & 1.70e-1 & 0.97 & 12 \\ 
	& 16384 & 2.24e-2 & 1.04 & 9.33e-2 & 0.87 & 93 \\ 
	&66536 & 1.22e-2 & 0.88 & 6.09e-2 & 0.61 & 732 \\ \midrule 
	Unlimited & 1024 & 1.20e-2 & 2.25 & 3.53e-2 & 2.04 & 1 \\ 
	& 4096 & 2.83e-3 & 2.09 & 7.82e-3 & 2.18 & 11 \\ 
	& 16384 & 6.93e-4 & 2.03 & 1.98e-3 & 1.98 & 85 \\ 
	& 66536 & 1.72e-4 & 2.01 & 4.92e-4 & 2.01 & 677 \\ \bottomrule 
\end{tabular}
\end{table}

\subsection{Solid body rotation on the stretched mesh}

To assess the behavior of ILR for non-uniform scalar flow field against an 
anisotropic mesh, we consider the following solid body rotation problem
proposed by LeVeque \cite{LeVeque1996} 
\[
u_t-(y-0.5)u_x+(x-0.5)u_y=0,
\]
on a highly-stretched mesh in the computational domain $[0,1]\times[0,1]$
(Fig. \ref{fig:sbrmesh}). The highly-stretched mesh can be generated in the 
following three steps \cite{Diskin2012}:
\begin{enumerate}
	\item A rectangular mesh is stretched toward the horizontal line $y=0.5$ 
	         by a factor $\beta=1.2$;
	\item Irregularities are introduced by random shifts of interior nodes in   
	         $x$-directions;
	\item Each distorted quadrilateral is further divided along its diagonal    
	         direction.
\end{enumerate}

The initial profile consists of smooth hump, cone and slotted cylinder. 
These shapes are located within the circle of radius $r_0=0.15$ centered in 
$(x_0,y_0)$. In the other region the initial value vanishes. Denote by 
$r(x,y)=\sqrt{(x-x_0)^2+(y-y_0)^2}/r_0$ the normalized distance. The 
slotted cylinder is centered at $(x_0,y_0)=(0.5,0.75)$ and
\[
u_0(x,y)=
\begin{cases}
1, & \text{if~}|x-x_0|\ge 0.025 \text{~or~} y\ge 0.85,\\
0,  & \text{otherwise,}
\end{cases}
\]
the sharp cone is centered at $(x_0,y_0)=(0.5,0.25)$ and
\[
u_0(x,y)=1-r(x,y),
\]
and the smooth hump is centered at $(x_0,y_0)=(0.25,0.5)$ and
\[
u_0(x,y)=\dfrac{1+\cos(\pi r(x,y))}{4}.
\]

In the computation we use a fixed time step $\Delta t=0.6\pi h$. The 
homogeneous Dirichlet boundary conditions are prescribed. To measure the 
oscillation of the numerical solution we compute the discrete total variation 
(TV) of the piecewise \emph{linear} solution $\hat u_h=\displaystyle
\sum_{T_0\in\mathcal T}\hat u_0\mathbb 1_{T_0}$ in the following way
\[
\mathrm{TV}(\hat u_h) = \sum_{T_0\in\mathcal T} 
\left(\|\nabla \hat u_0\|\cdot|T_0|+\dfrac{1}{2}\sum_{j=1}^{J} 
|u_j^--u_j^+|\cdot|e_j|\right).
\]
The evolutions of the discrete total variation are depicted in Fig. 
\ref{fig:sbrresolution} for different resolutions and in Fig. \ref{fig:sbrlimiter} 
for different methods. From here we can see that the total variation is 
decreasing as a whole except for individual tiny fluctuations when the slot 
rotates towards the stretching line $y=0.5$. The total variation will flatten 
out as mesh refines. Moreover, our ILR has the best performance on 
preserving the total variation of the solution on the highly-stretched mesh 
among the three methods. The snapshots in Fig. \ref{fig:sbrcont} show the 
shapes of the solutions with different methods at $t = 2\pi$, corresponding 
to a complete rotation. We can observe that the result of ILR captures the  
initial profile much better than that of MLP or Barth's limiter.

\begin{figure}[htbp]
  \centering
  \begin{minipage}[b]{.24\linewidth}
   \includegraphics[width=\textwidth]{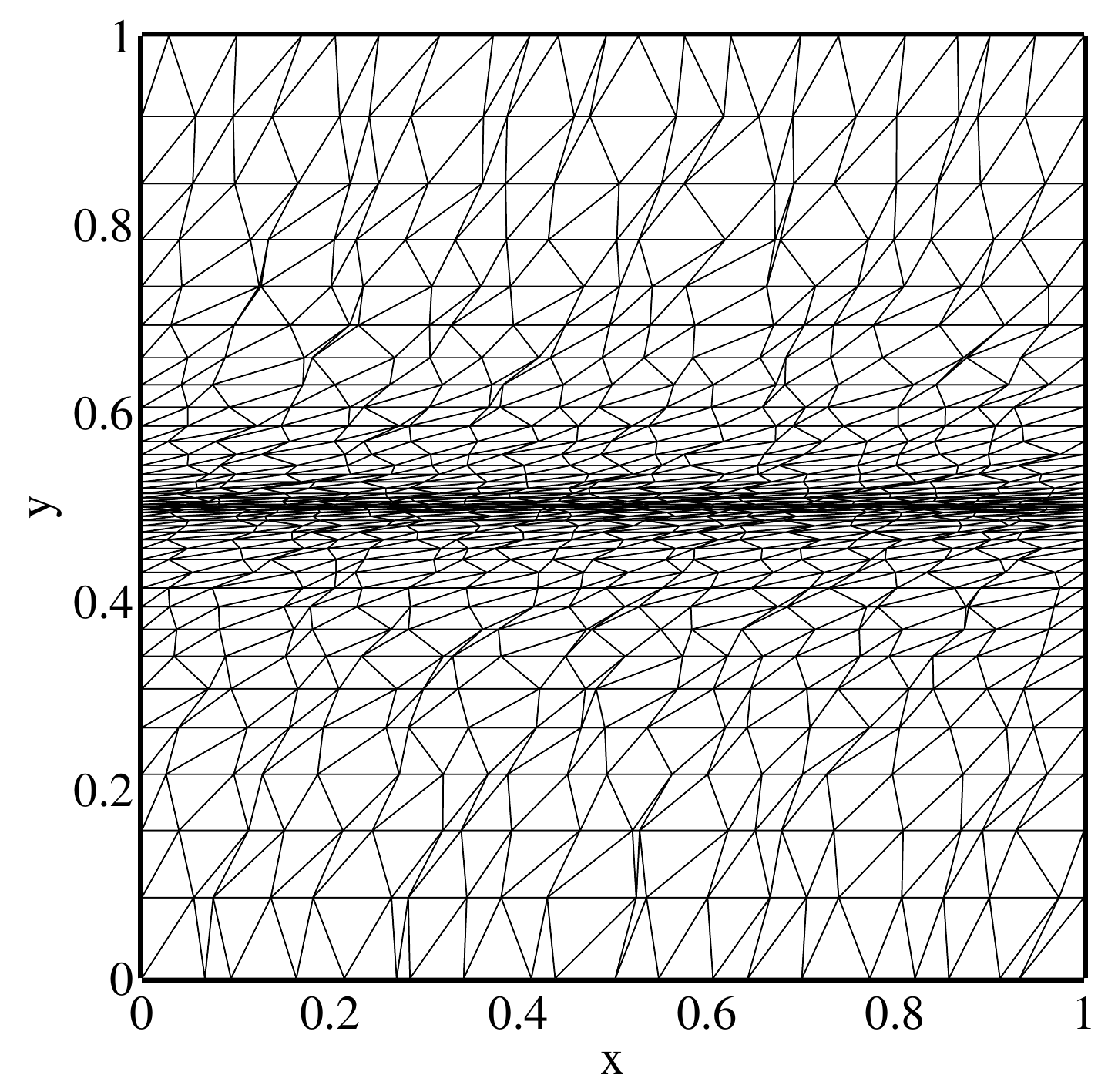}
   \caption{Stretched mesh with $20\times 40\times 2$ cells.}
   \label{fig:sbrmesh}
  \end{minipage}
  \quad
   \begin{minipage}[b]{.3\linewidth}
  \includegraphics[width=\textwidth]{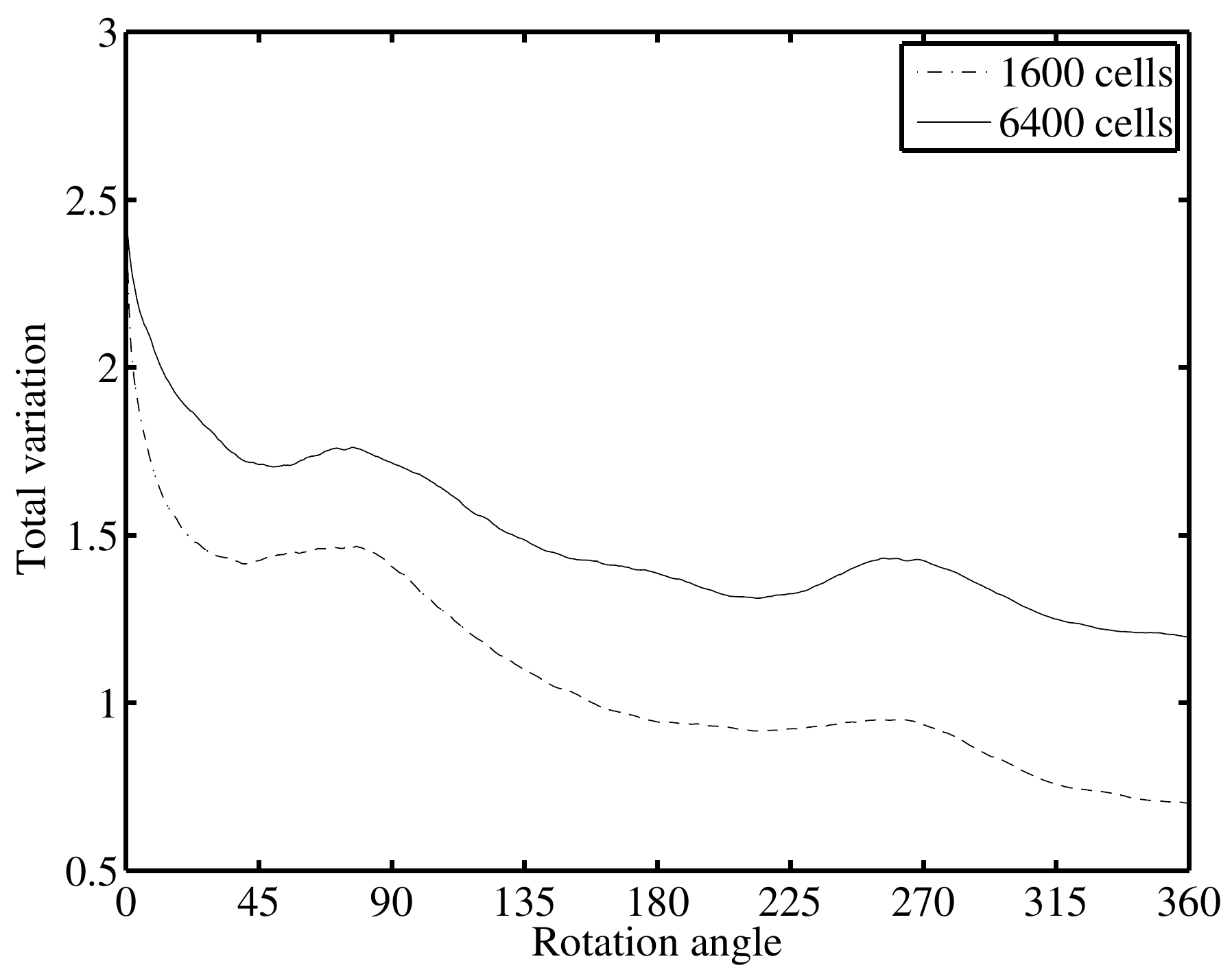}
  \caption{Total variation versus angle of rotation for different resolutions.}
  \label{fig:sbrresolution}
  \end{minipage}
   \quad
  \begin{minipage}[b]{.3\linewidth}
  \includegraphics[width=\textwidth]{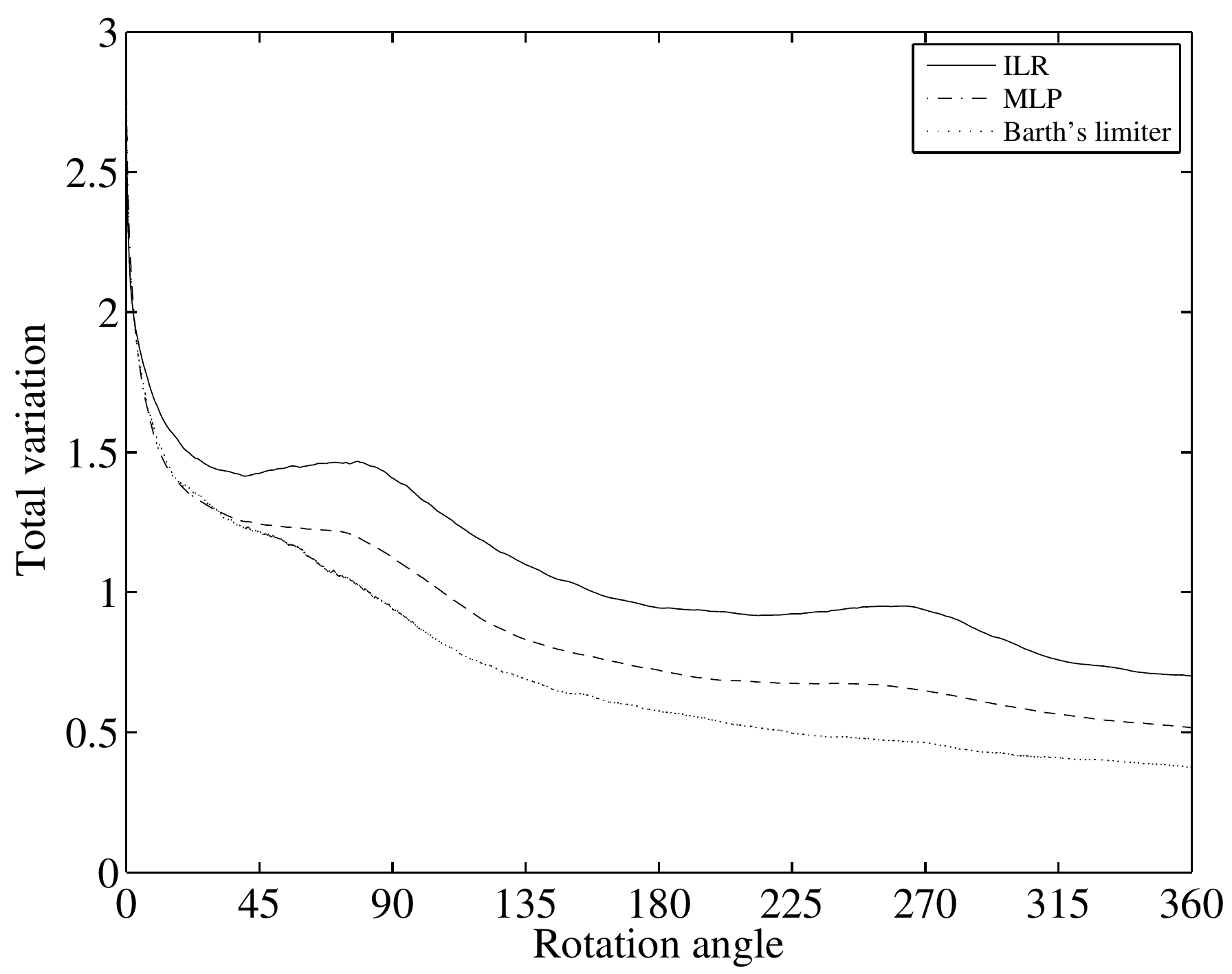}
  \caption{Total variation versus angle of rotation for different methods.}
  \label{fig:sbrlimiter}
\end{minipage}
  \subfloat{\includegraphics[width=.3\textwidth]{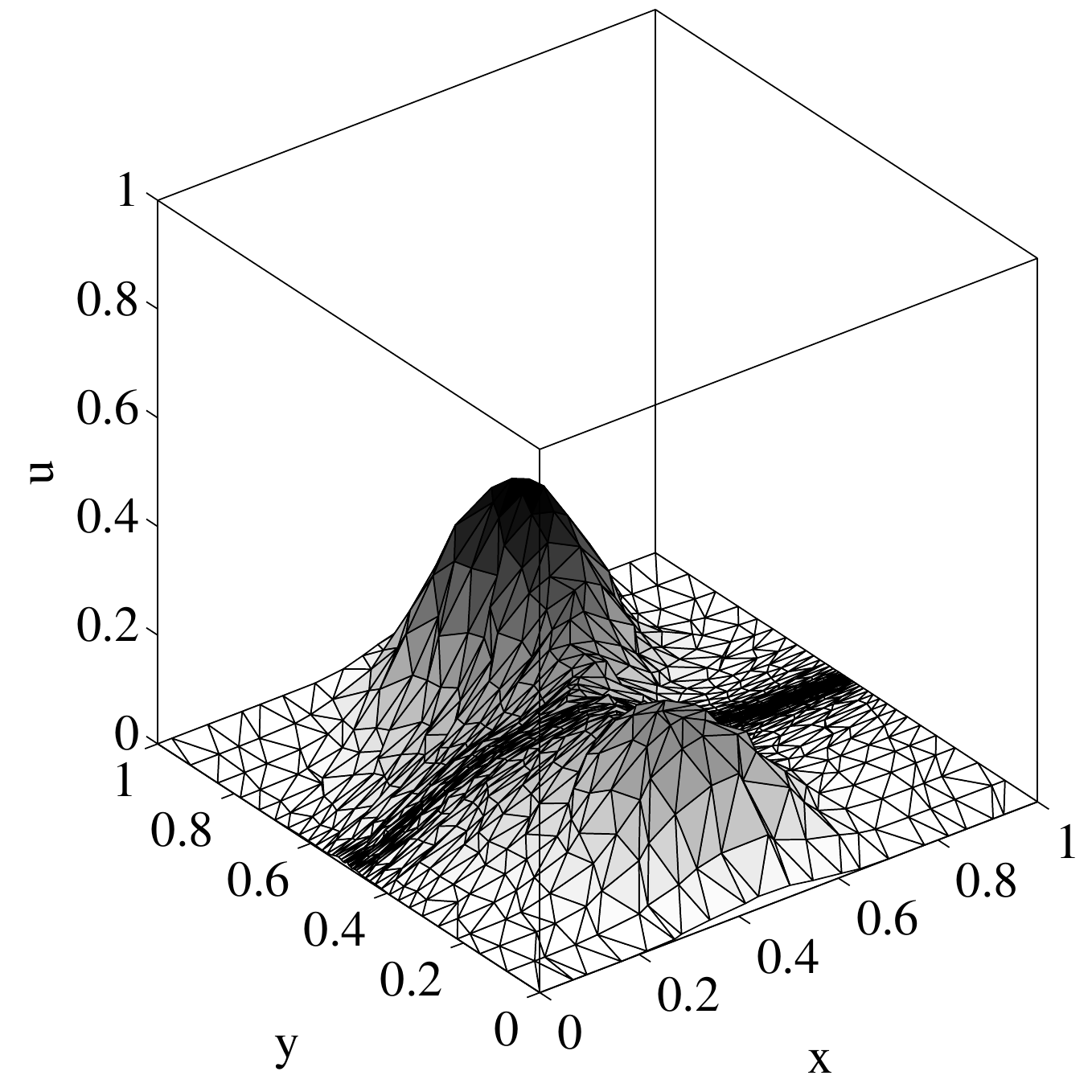}}
  \subfloat{\includegraphics[width=.3\textwidth]{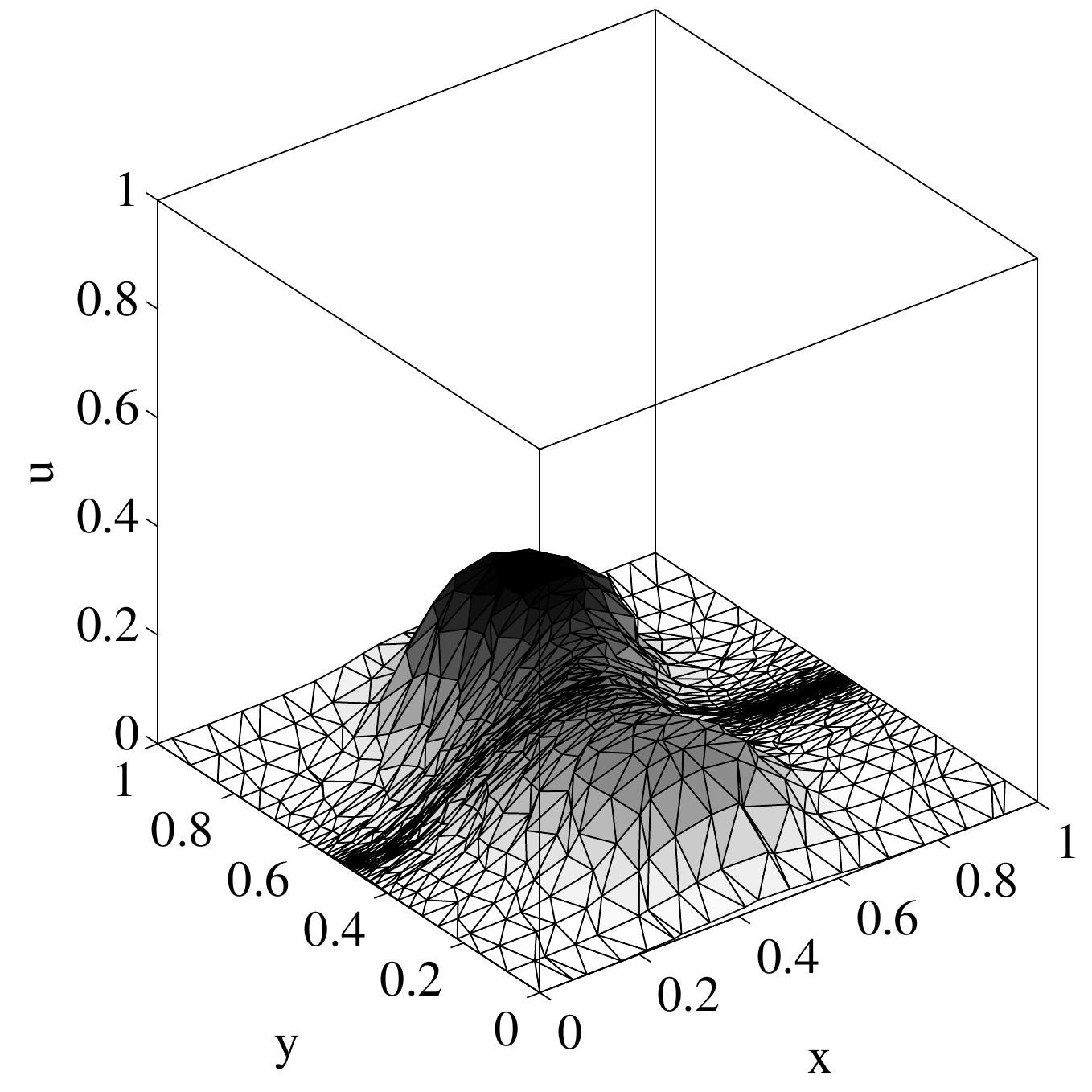}}
  \subfloat{\includegraphics[width=.3\textwidth]{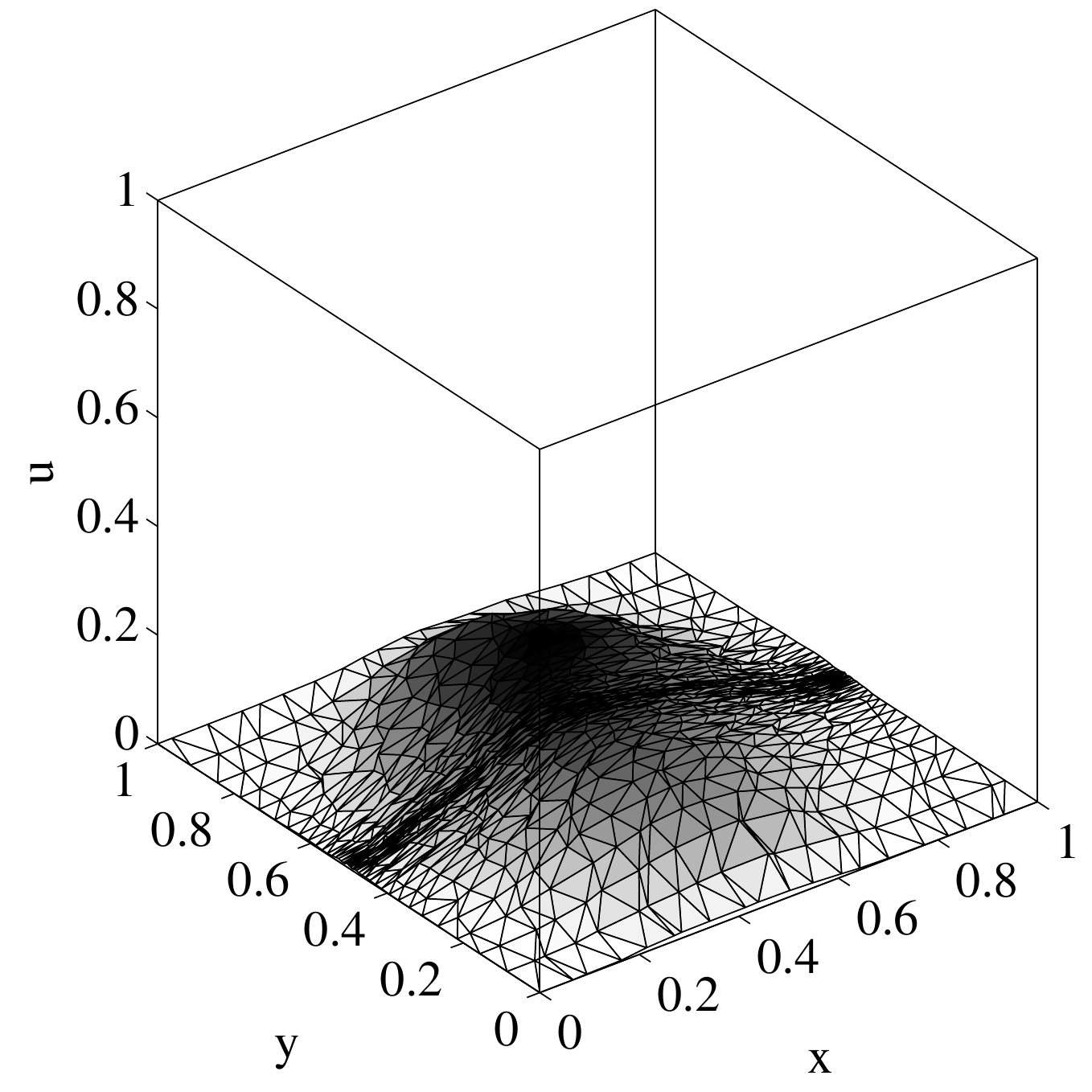}}\\
  \addtocounter{subfigure}{-3}
  \subfloat[ILR]
  {\includegraphics[width=.3\textwidth]{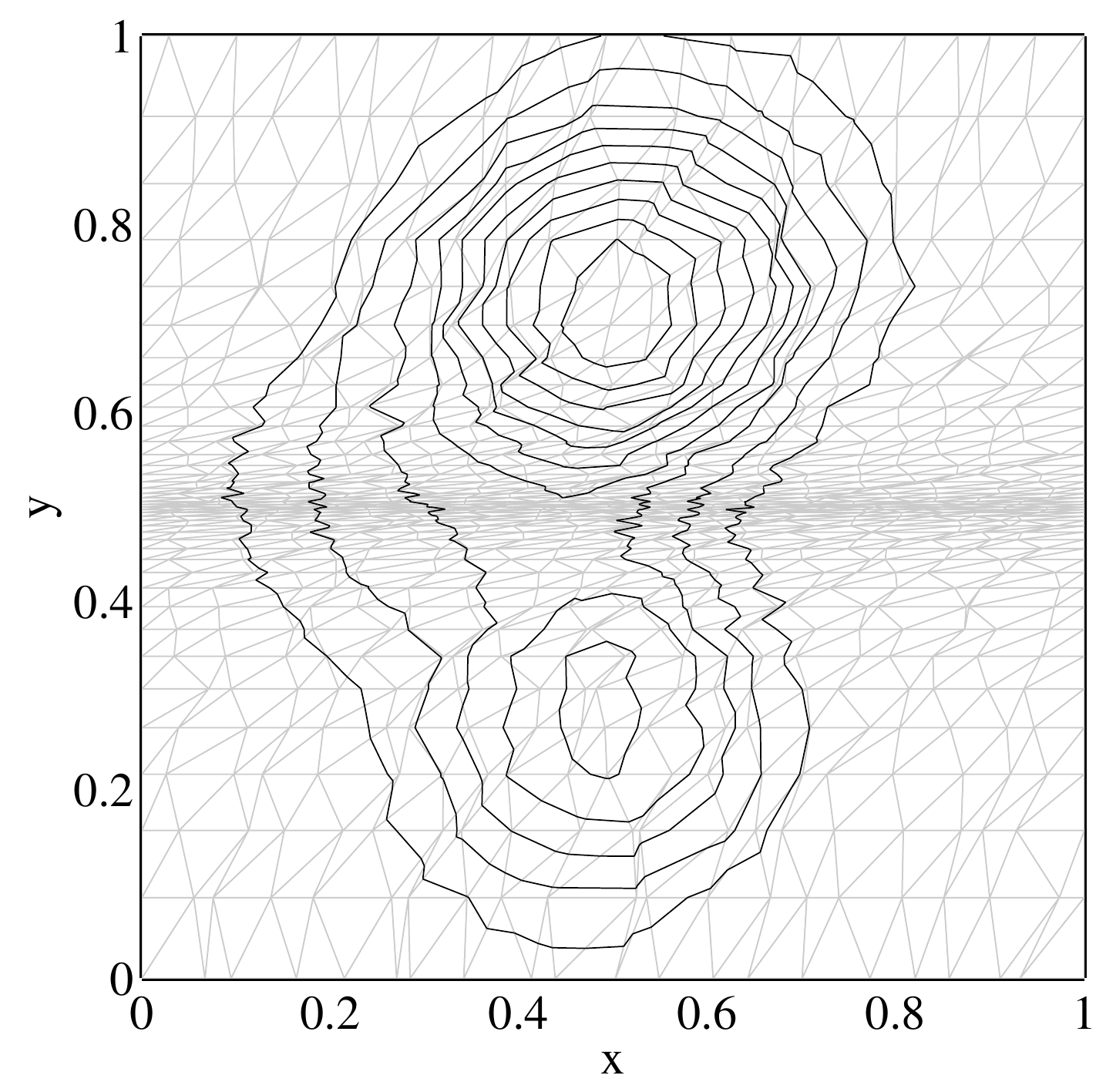}}
  \subfloat[MLP]
  {\includegraphics[width=.3\textwidth]{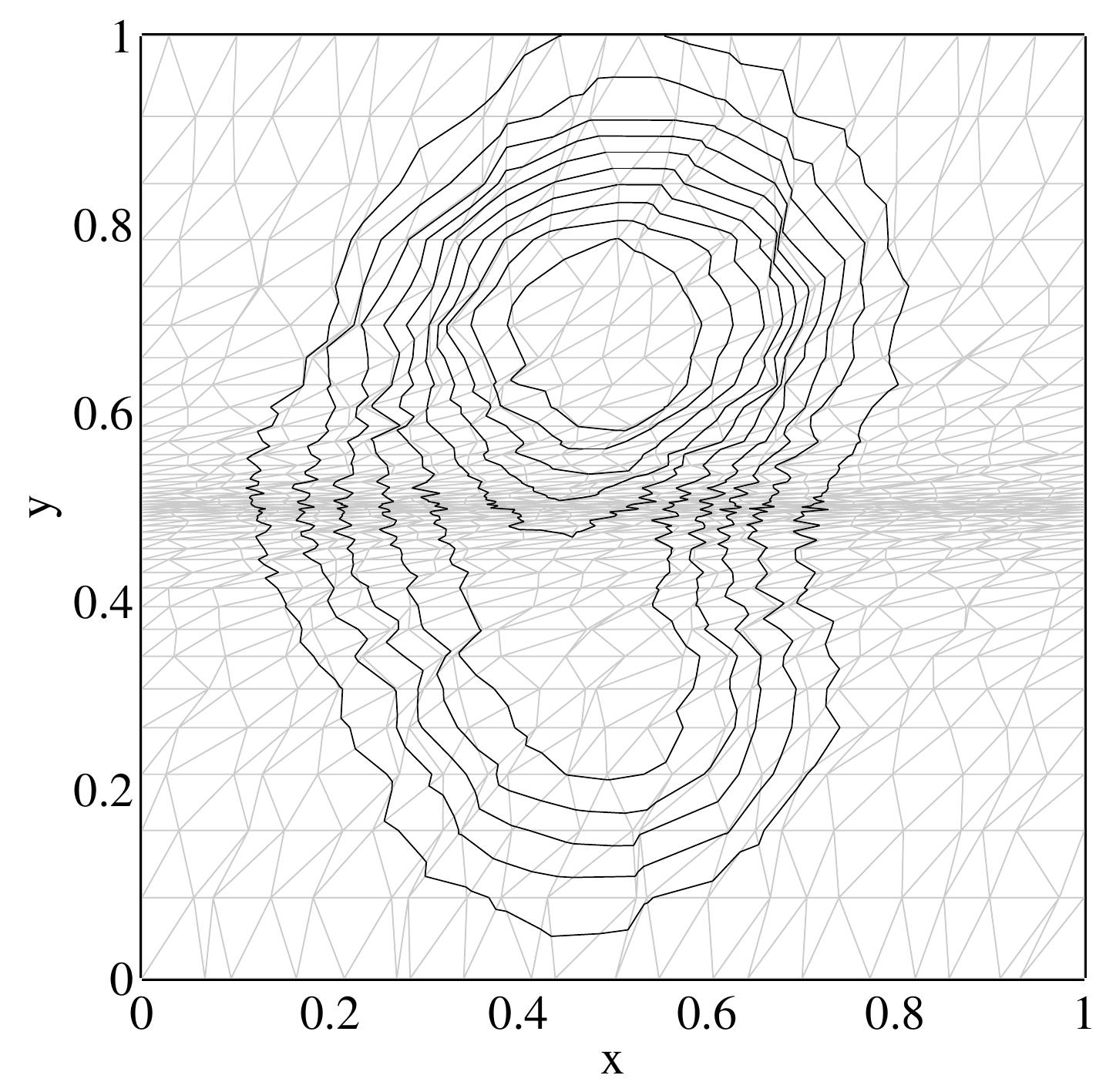}}
  \subfloat[Barth's limiter]
  {\includegraphics[width=.3\textwidth]{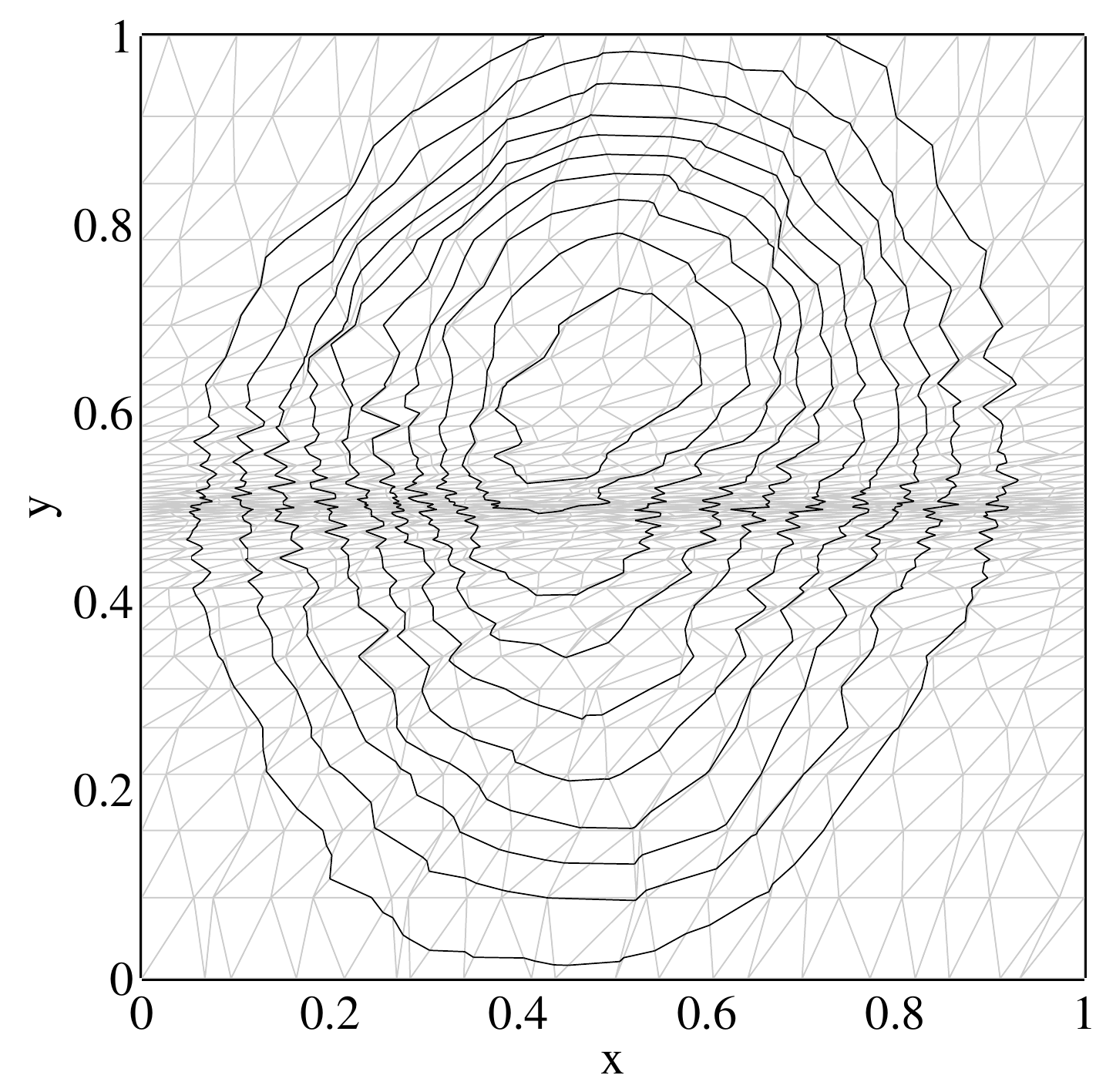}}
  \caption{Solutions of the solid body rotation problem at $t=2\pi$.}
  \label{fig:sbrcont}
\end{figure}

\subsection{Double Mach reflection}

As one benchmark of the Euler equations, the double Mach reflection 
problem \cite{Woodward1984} is considered here. The whole computational 
domain is $[0,4]\times[0,1]$. Initially, a right-moving Mach 10 shock is 
located at the beginning of the wall $(x,y)=(1/6,0)$, making a $60^\circ$ 
angle with the $x$-axis. The boundary setup is the same to \cite{Park2010}.
The HLLC flux is used as the numerical flux and computation is carried out 
until $t=0.2$.

Fig. \ref{fig:DM} shows the comparison of density contours computed on 
two successively refined Delaunay meshes. Obviously ILR provides a more 
stable structure for the Mach stem and slip line than Barth's limiter does. 
Fig. \ref{fig:DMcloseup} shows the close-up view around the Mach stem. 
Again ILR provides better resolution below the Mach stem, though the 
Barth's limiter captures the shear layer instability from the shock triple point 
more accurately.

\begin{figure}[htbp]
	\centering
	\subfloat[Barth's limiter: 23367 cells]
	{\includegraphics[width=.48\textwidth]{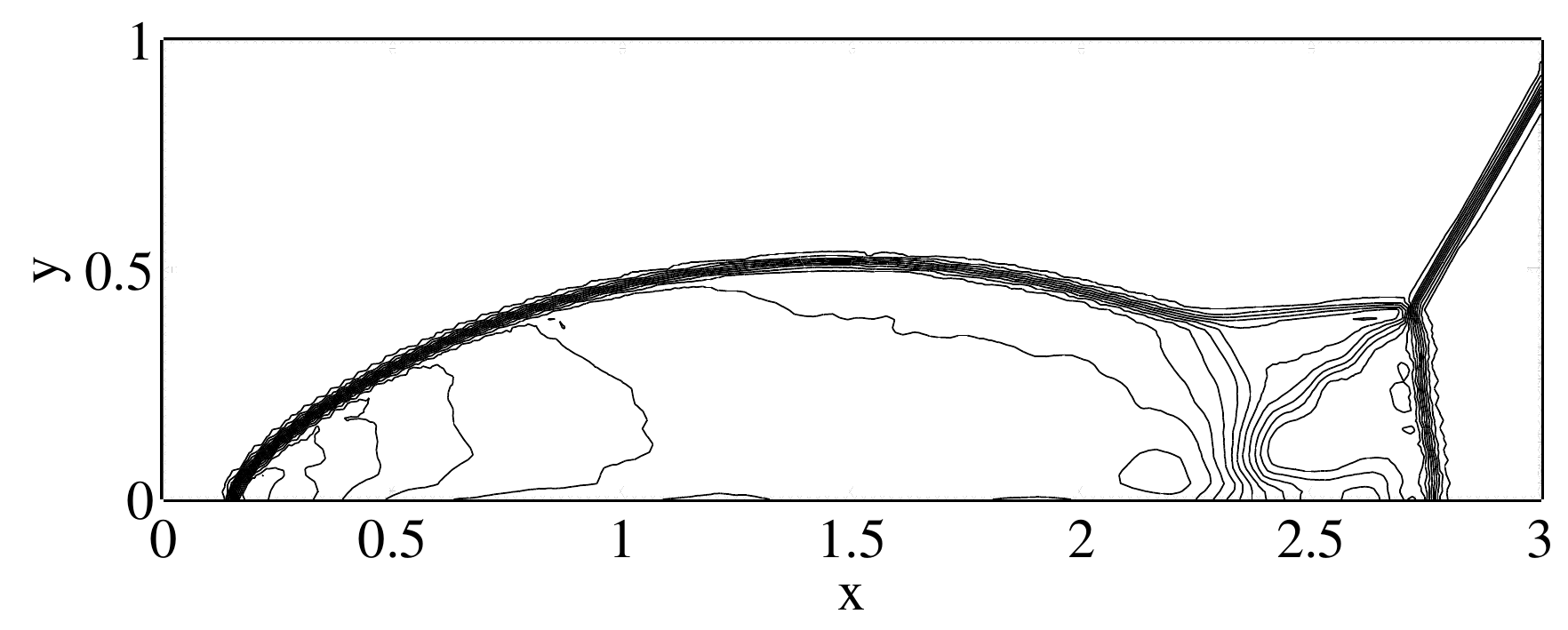}}
	\quad
	\subfloat[Barth's limiter: 93468 cells]
	{\includegraphics[width=.48\textwidth]{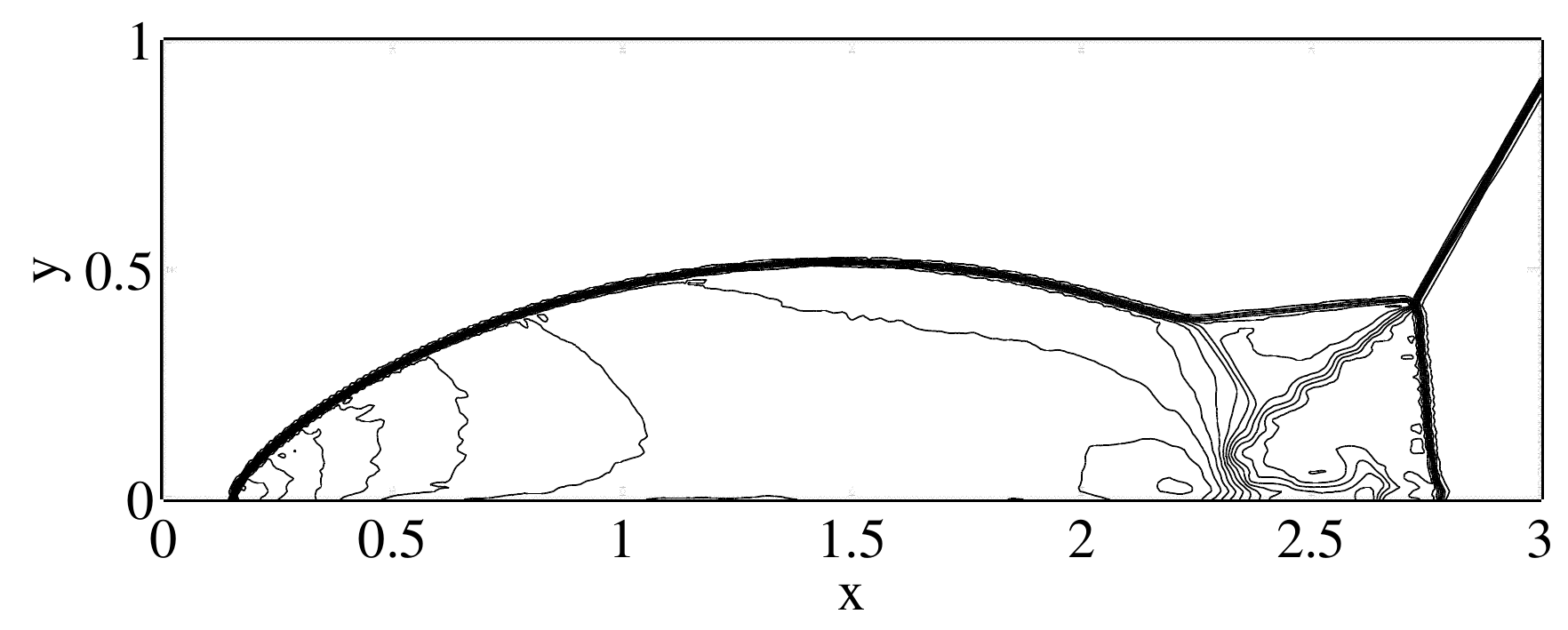}}\\
	\subfloat[ILR: 23367 cells]
	{\includegraphics[width=.48\textwidth]{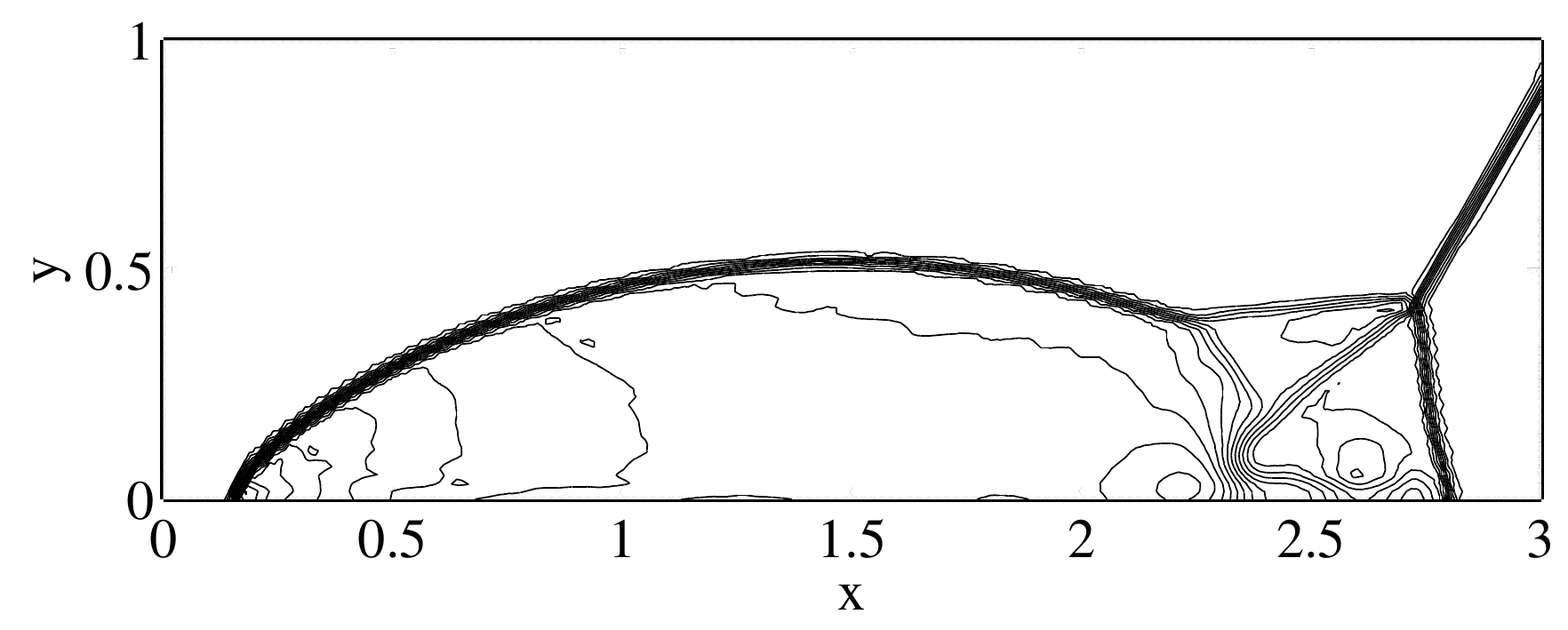}}
	\quad
	\subfloat[ILR: 93468 cells]
	{\includegraphics[width=.48\textwidth]{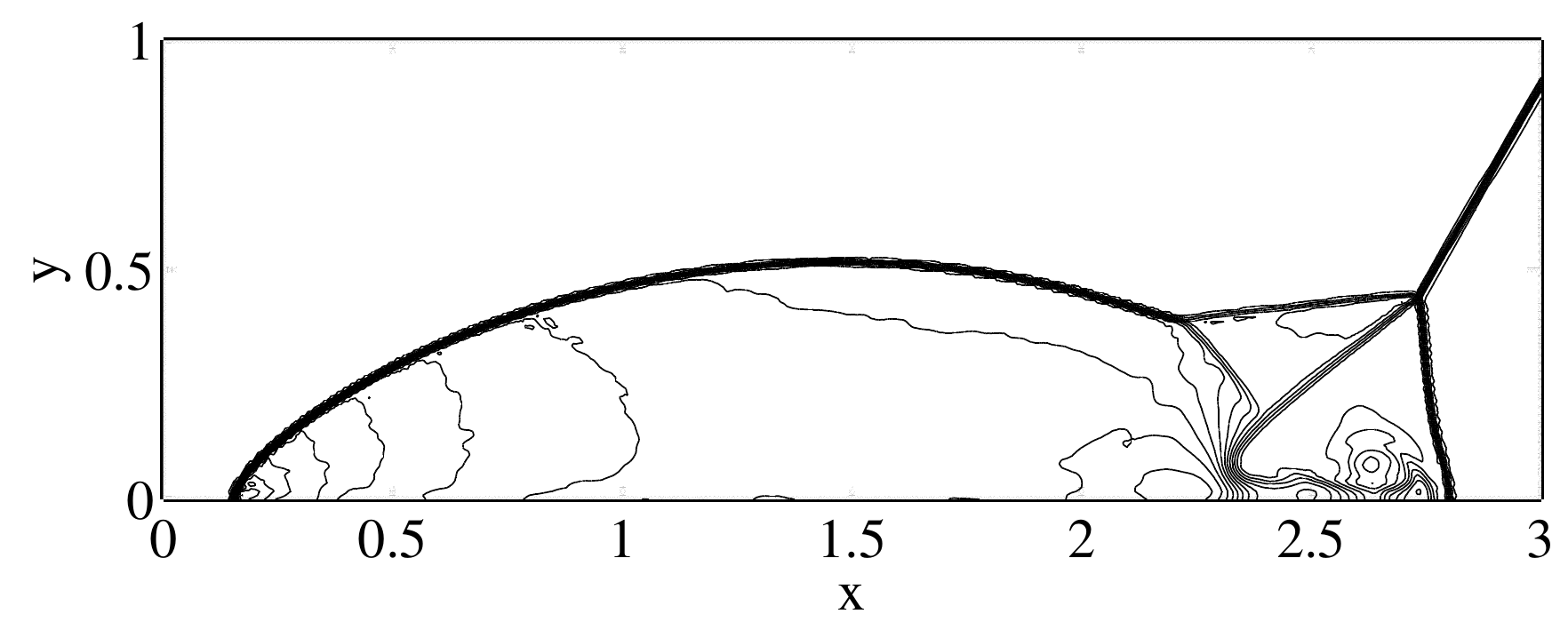}}
	\caption{Comparison of density contours for Double Mach reflection.
		         Thirty equally spaced contour lines from $\rho=1.5$ to 
		         $\rho=23.5$.}
	\label{fig:DM}
	\subfloat[Barth's limiter: 23367 cells]
	{\includegraphics[width=.45\textwidth]{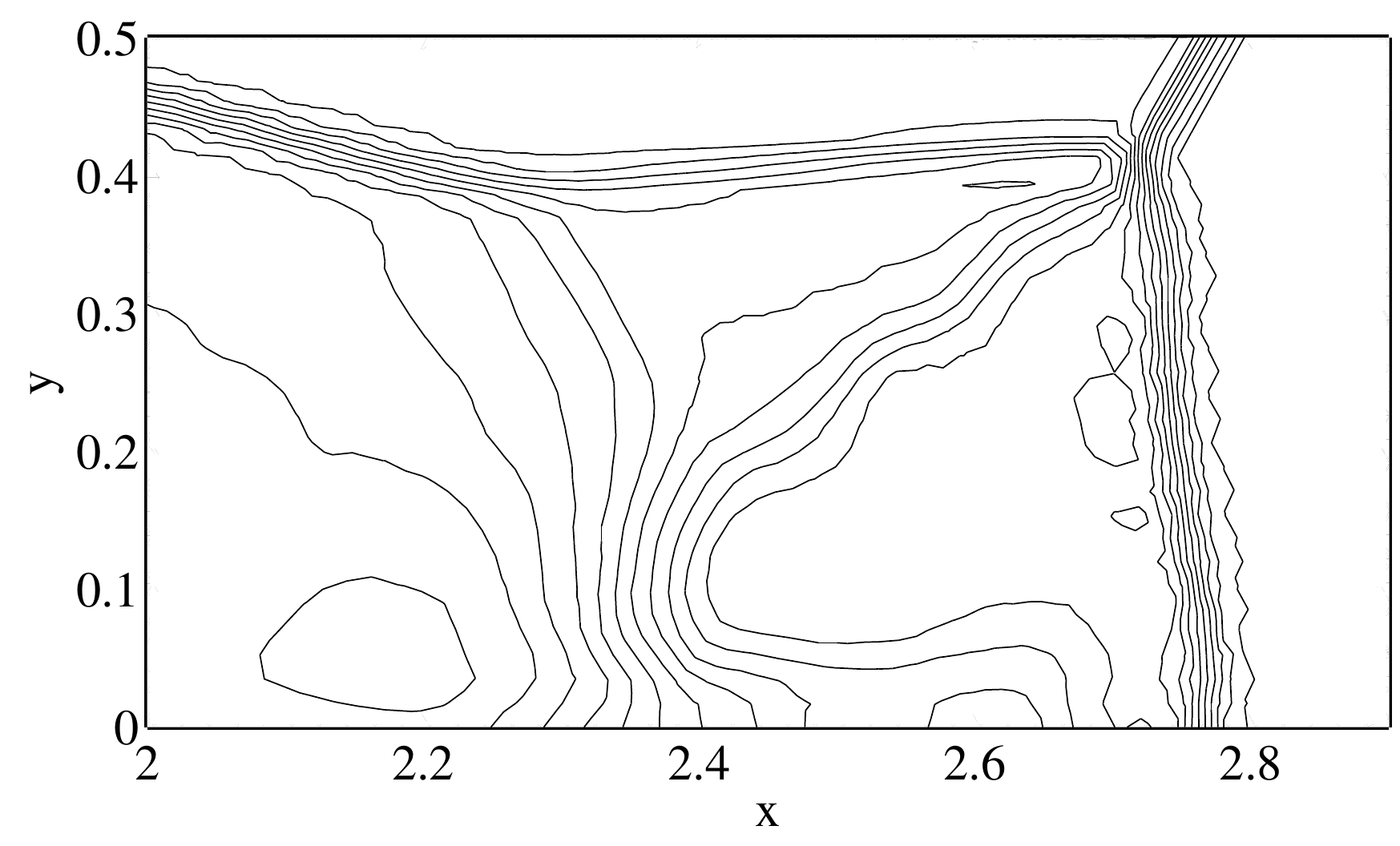}}
	\quad
	\subfloat[Barth's limiter: 93468 cells]
	{\includegraphics[width=.45\textwidth]{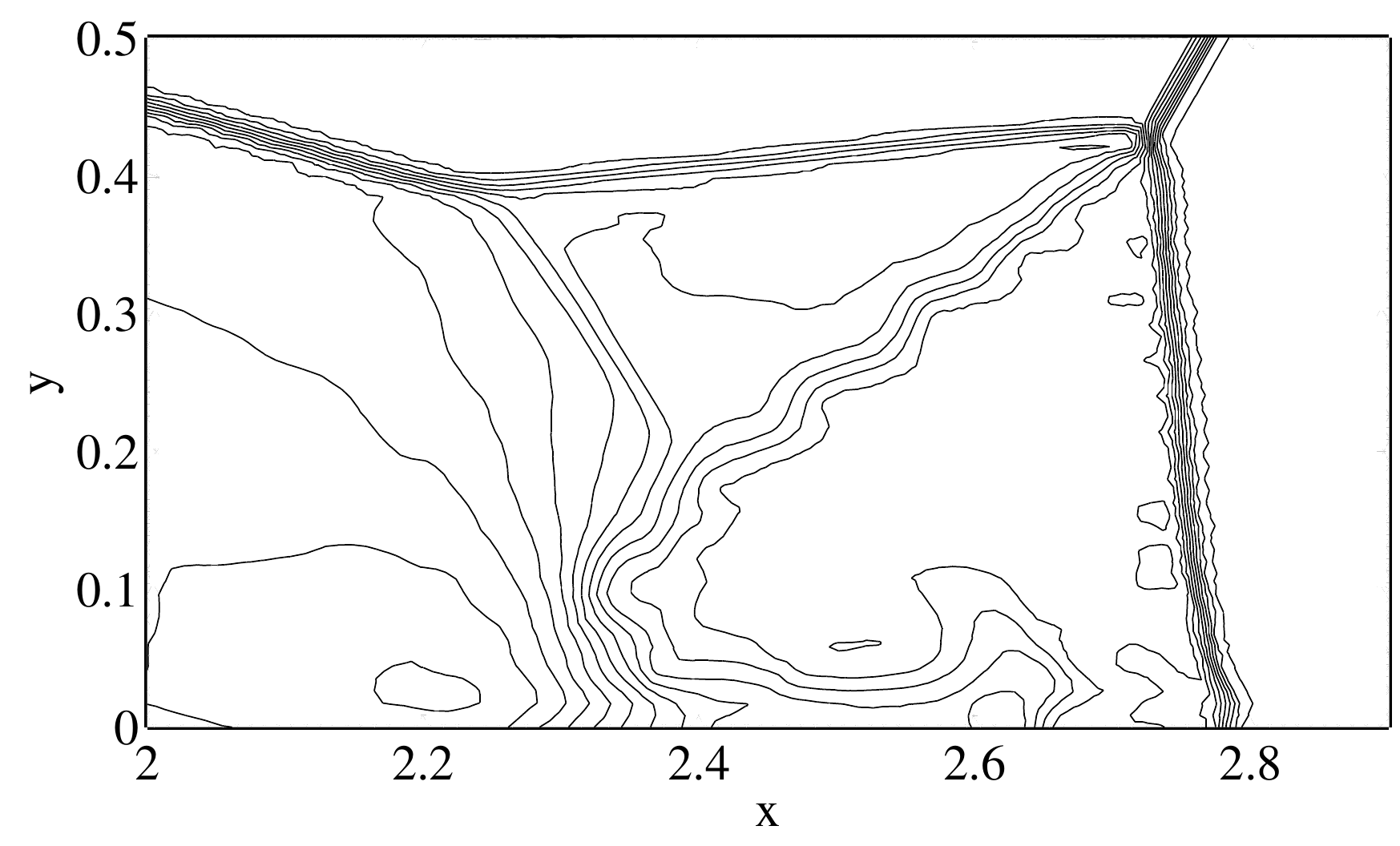}}\\
	\subfloat[ILR: 23367 cells]
	{\includegraphics[width=.45\textwidth]{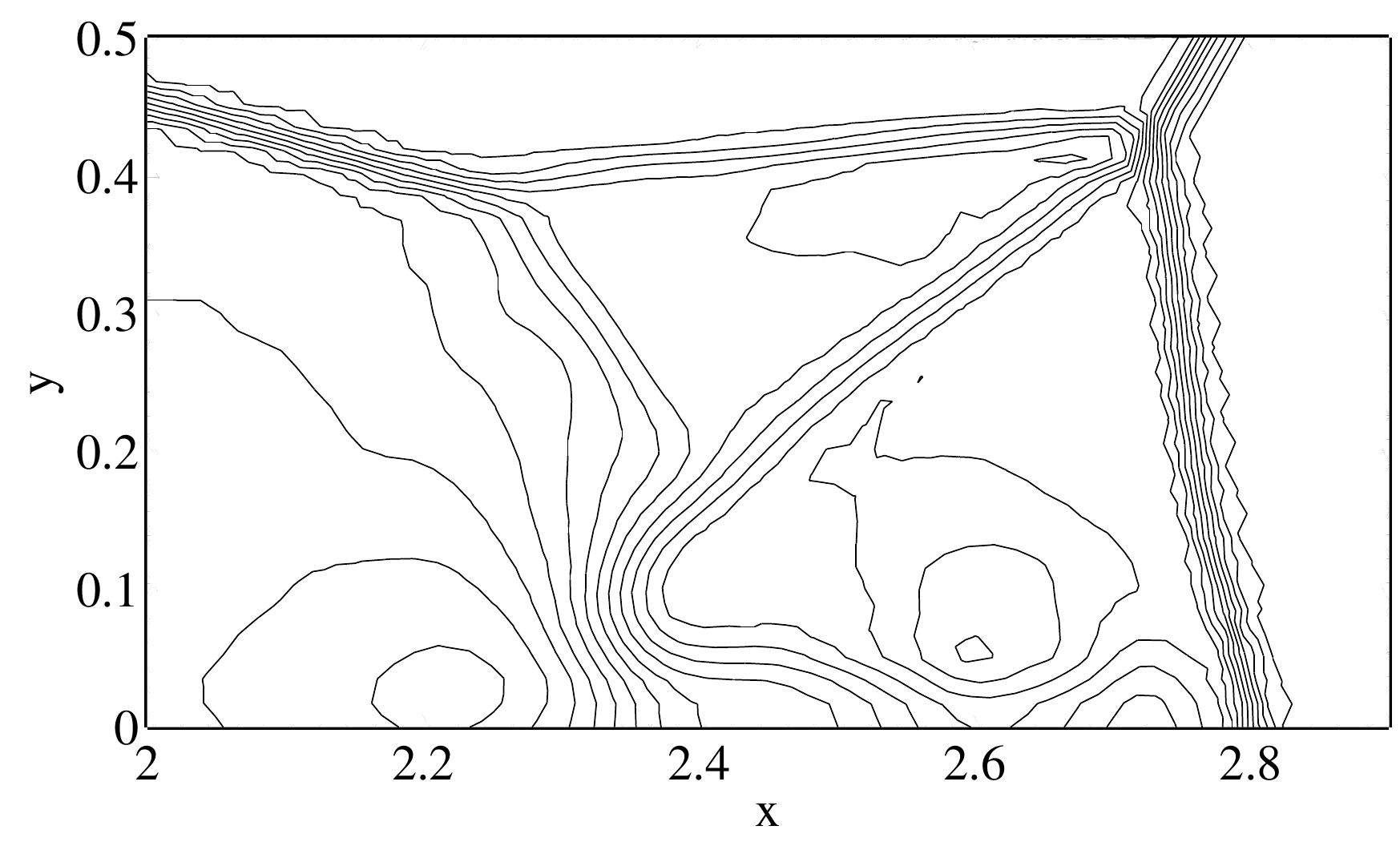}}
	\quad
	\subfloat[ILR: 93468 cells]
	{\includegraphics[width=.45\textwidth]{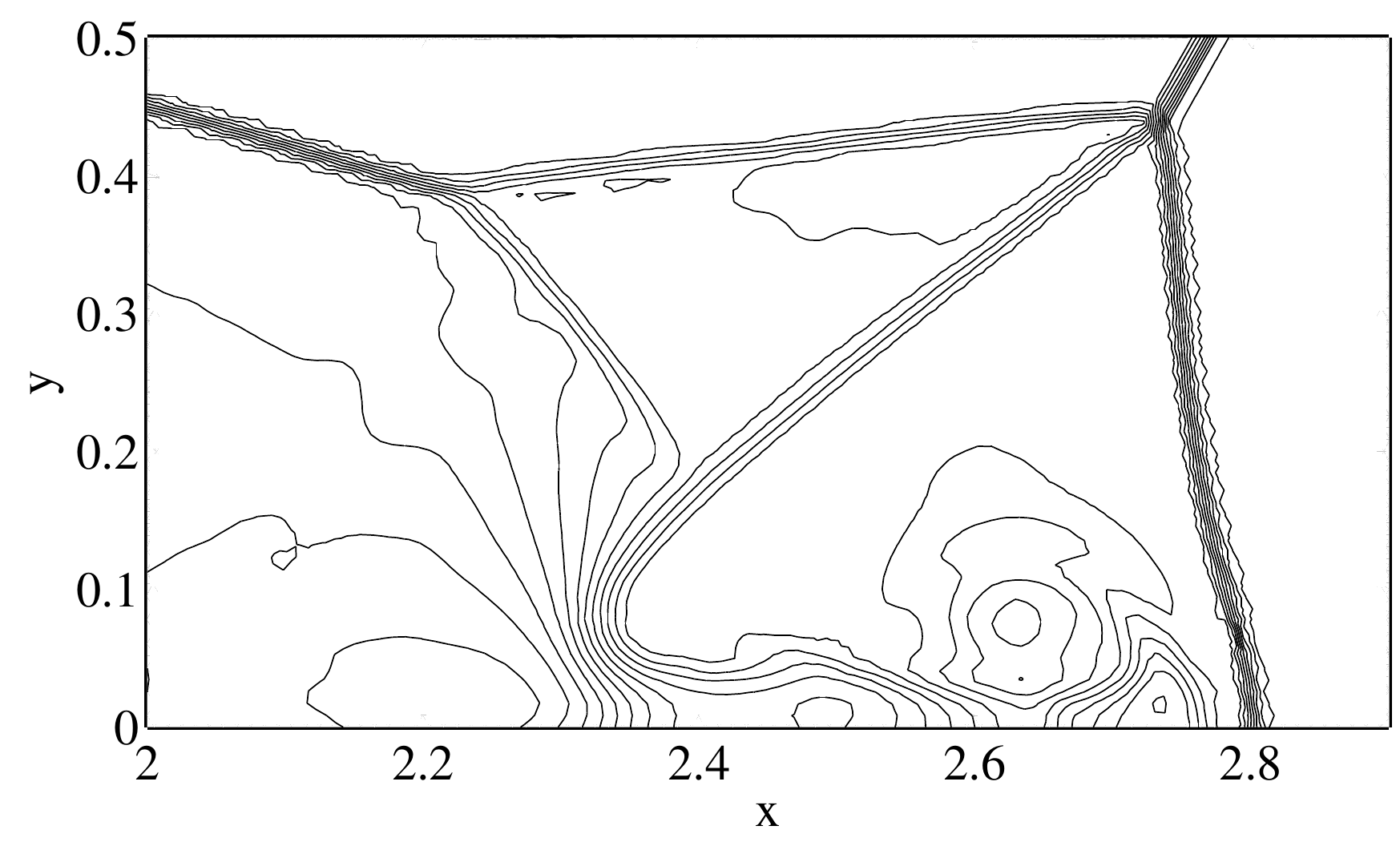}}
	\caption{Close-up view around the double Mach stem.}
	\label{fig:DMcloseup}
\end{figure}

\subsection{A Mach 3 wind tunnel with a step}

This is another popular test for high-resolution schemes 
\cite{Woodward1984}. The problem begins with a uniform Mach 3 flow in a 
wind tunnel with a step of 1 length unit high and 3 length units long. The 
step is $0.2$ length units high and is located $0.6$ length units from the 
left end of the tunnel. Reflective boundary condition is applied along the 
walls of the tunnel. And inflow and outflow boundary conditions are used at 
the left and right boundaries respectively. To resolve the singularity at the 
corner of the step, we refine the initial mesh near the corner, as is shown in 
Fig. \ref{fig:FWSinitmesh}. 

To capture the shock structure efficiently we apply the mesh adaptation. 
The adaptive indicator we use here is the jump of pressure. And the 
adaptation is carried out on each time level. The HLLC flux is used for this 
problem. Figs. \ref{fig:FWSmesh} and \ref{fig:FWScont} show the adaptive 
mesh and contour picture for the density at $t=4$. It can be observed that 
the ILR captures the discontinuities well and provides a high resolution of 
the slip line from the shock triple point.

\begin{figure}[htbp]
  \centering
  \includegraphics[height=.25\textheight]{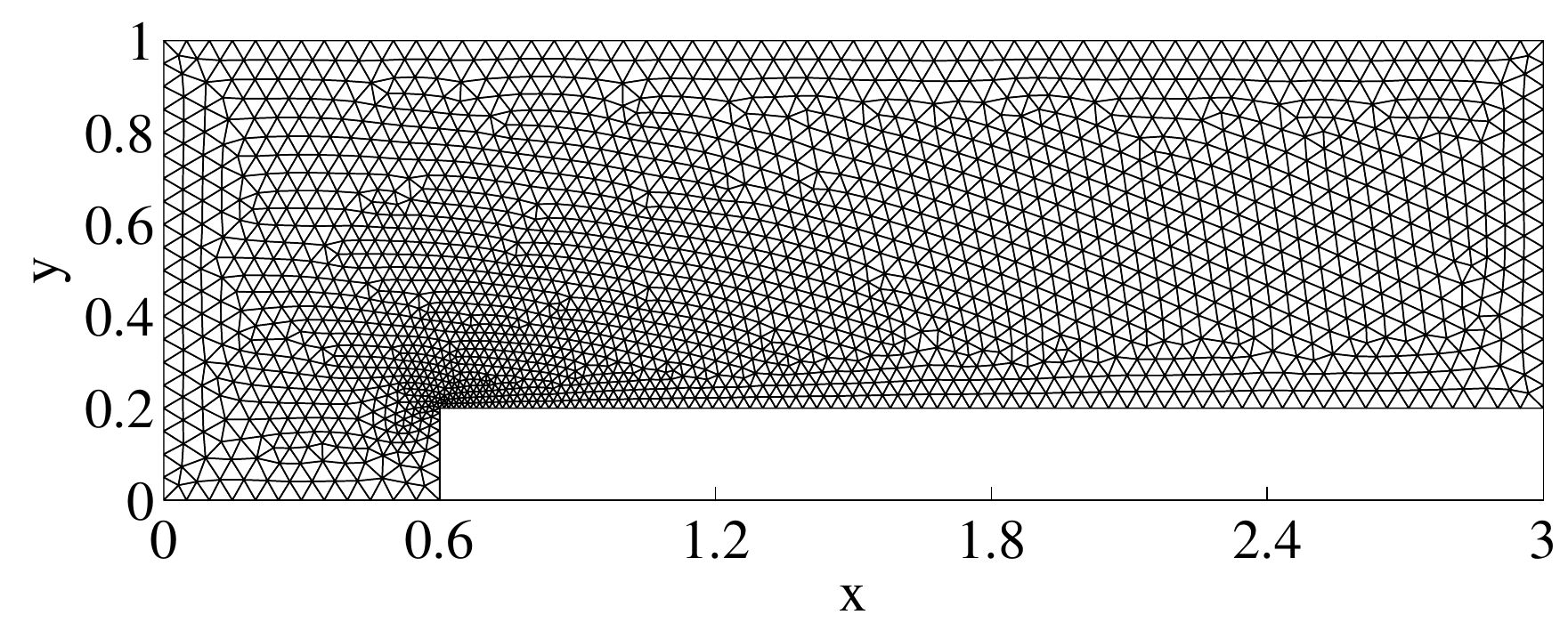}
  \caption{Initial background mesh with 4009 cells.}\label{fig:FWSinitmesh}
  \includegraphics[height=.25\textheight]{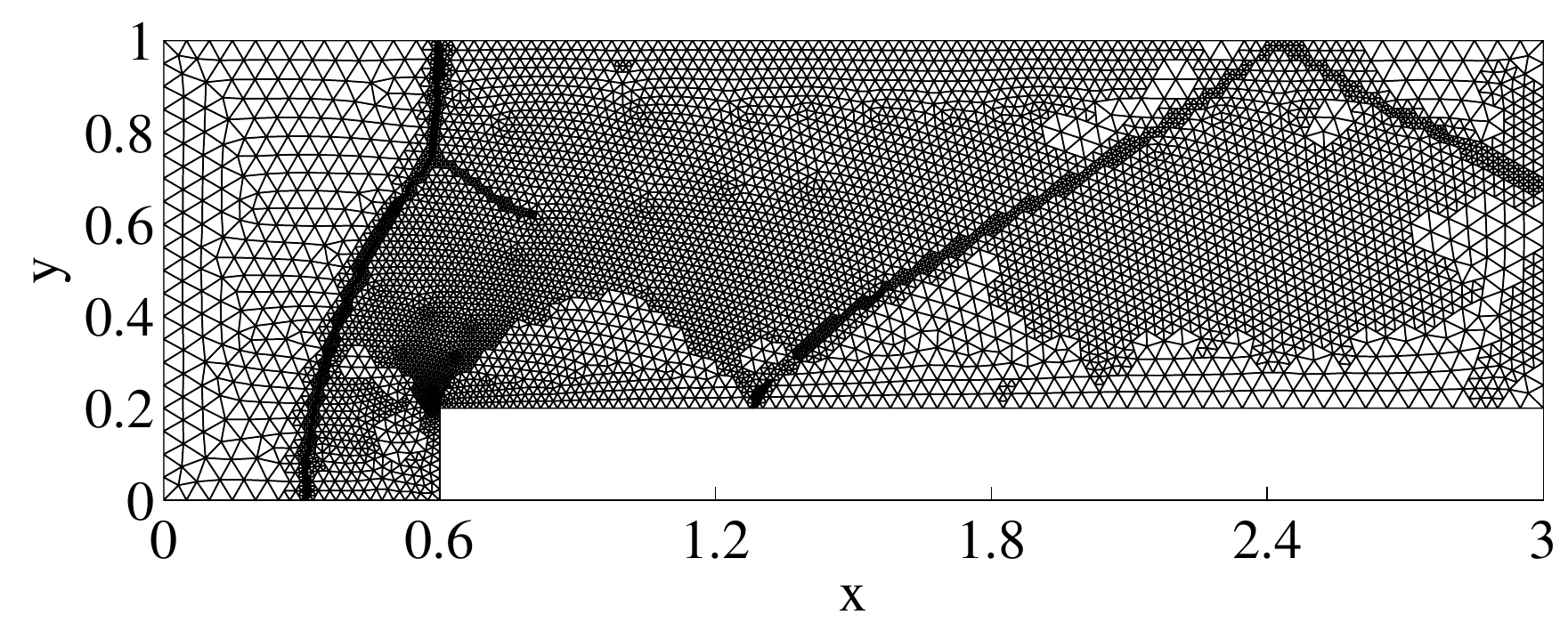}
  \caption{Final adaptive mesh with 13483 cells.}\label{fig:FWSmesh}
  \includegraphics[height=.25\textheight]{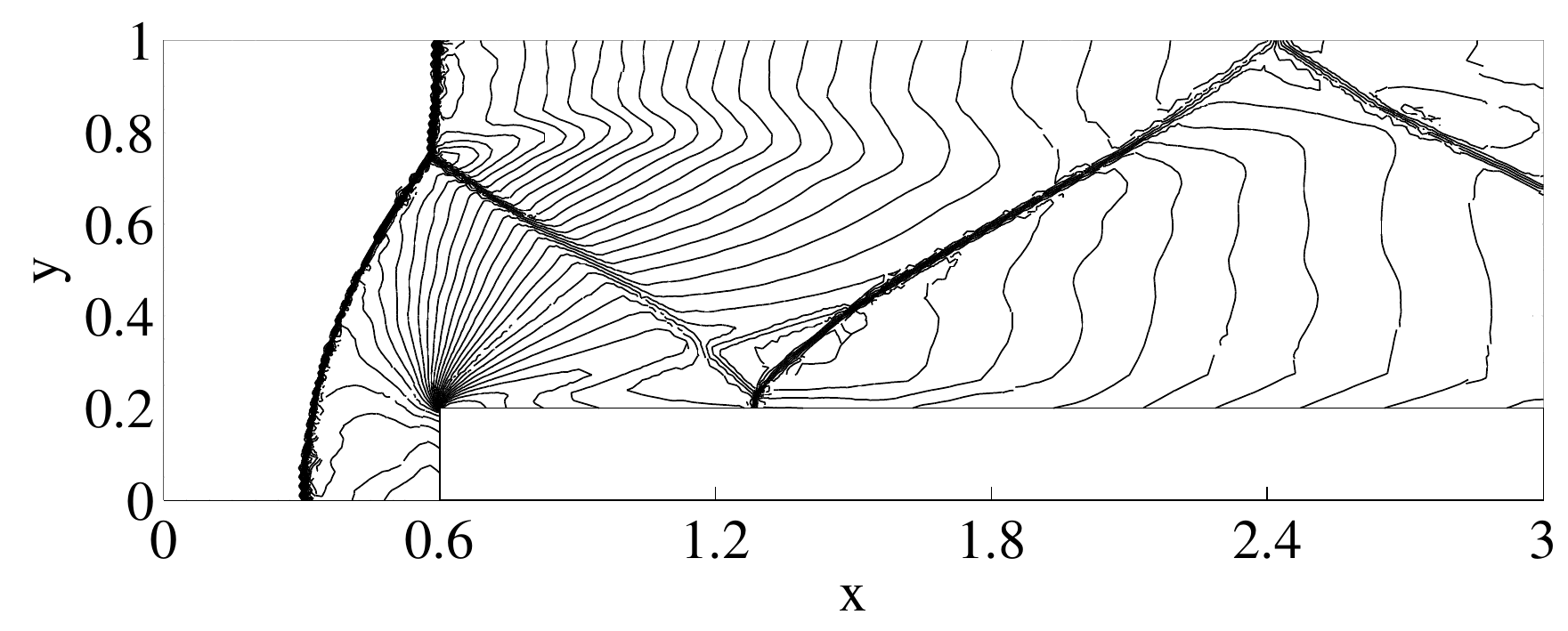}
  \caption{Thirty equally spaced contour lines from $\rho=0.32$ to        
  	           $\rho=6.15$.}\label{fig:FWScont}
\end{figure}

\subsection{Sedov point-blast wave}

The Sedov point-blast wave is a self-similar problem with both low density 
and low pressure. The analytical solution of this problem is available and 
can be found on \cite{Sedov1959,Kamm2000}. Without 
positivity-preserving treatment the computation may break down due to the 
presence of negative density or pressure. Here, we regard this spherically 
symmetric problem as three-dimensional cylindrically symmetric flow. The 
governing equation is 
\[
\dfrac{\partial r\bm u}{\partial t}+\dfrac{\partial r\bm f(\bm u)}{\partial r}
+\dfrac{\partial r\bm g(\bm u)}{\partial z}=\bm s(r\bm u,(r,z)),
\]
where
\[
\bm u=
\begin{pmatrix}
\rho \\ \rho u \\ \rho v \\ E
\end{pmatrix},\quad
\bm f(\bm u)=
\begin{pmatrix}
\rho u \\\rho u^2+p\\\rho uv  \\u(E+p)   
\end{pmatrix},\quad
\bm g(\bm u)=
\begin{pmatrix}
\rho v \\\rho uv \\\rho v^2+p \\v(E+p)
\end{pmatrix},\quad 
\bm s(r\bm u,(r,z))=
\begin{pmatrix}
0\\p\\0 \\0
\end{pmatrix},
\]
with 
\[
p=(\gamma-1)\left(E-\dfrac{1}{2}\rho(u^2+v^2)\right).
\]

Let $\bm U=r\bm u$, then formally we obtain the Euler equations with a
source term
\begin{equation}
\dfrac{\partial \bm U}{\partial t}+\dfrac{\partial \bm f(\bm U)}{\partial r}+
\dfrac{\partial \bm g(\bm U)}{\partial z}=\bm s(\bm U,(r,z)).
\label{eq:hclsrource}
\end{equation}
The finite volume discretization of \eqref{eq:hclsrource} we apply is
\begin{equation}
\bm U_0^{n+1}=\bm U_0^n-\dfrac{\Delta t_n}{|T_0|}\sum_{j=1}^J
\mathcal F(\bm U_j^-,\bm U_j^+;\bm n_j)|e_j|
+\sum_{j=1}^J \Delta t_nw_j\bm s(\bm U_j^-,\bm z_j),
\label{eq:fvmsource}
\end{equation}
where $(w_1,w_2,w_3)=(1/3,1/3,1/3)$ for $J=3$, and
$(w_1,w_2,w_3,w_4)=(1/3,1/3,1/6,1/6)$ for $J=4$. It can be proved that 
the scheme \eqref{eq:fvmsource} also preserves positivity provided that the 
time step length satisfies $a\Delta t_n \le \beta h/4$. See Appendix B for 
details of the proof. 

In the blast wave setup, a finite amount of energy $E_0=0.851072$ is 
released at the origin at $t=0$. The initial density $\rho_0=1$. These 
values are chosen so that the shock arrives at $r=1$ at the chosen time 
$t = 1$ \cite{Kamm2000}. A triangular grid on $[0,1.2]\times[0,1.2]$ with 
uniform spacing $l=1.2/80$ is used for computation. Symmetric boundary 
conditions are imposed at the bottom and left edges. The total energy $E$ 
is a constant $E_0/2\pi l^3$ in the two cells at the left-bottom corner and 
$10^{-12}$ elsewhere (emulating a $\delta$-function at the origin). Again 
we use the HLLC flux as numerical flux. The contour lines of density and the 
profile along the radial direction are displayed in Fig. \ref{fig:sedov}, which 
demonstrate the robustness and stability of ILR.
\begin{figure}[htbp]
	\centering 
	\subfloat[Twenty equally spaced contour lines 
	from $\rho=0$ to $\rho = 6$.]
	{\includegraphics[width=.38\textwidth]{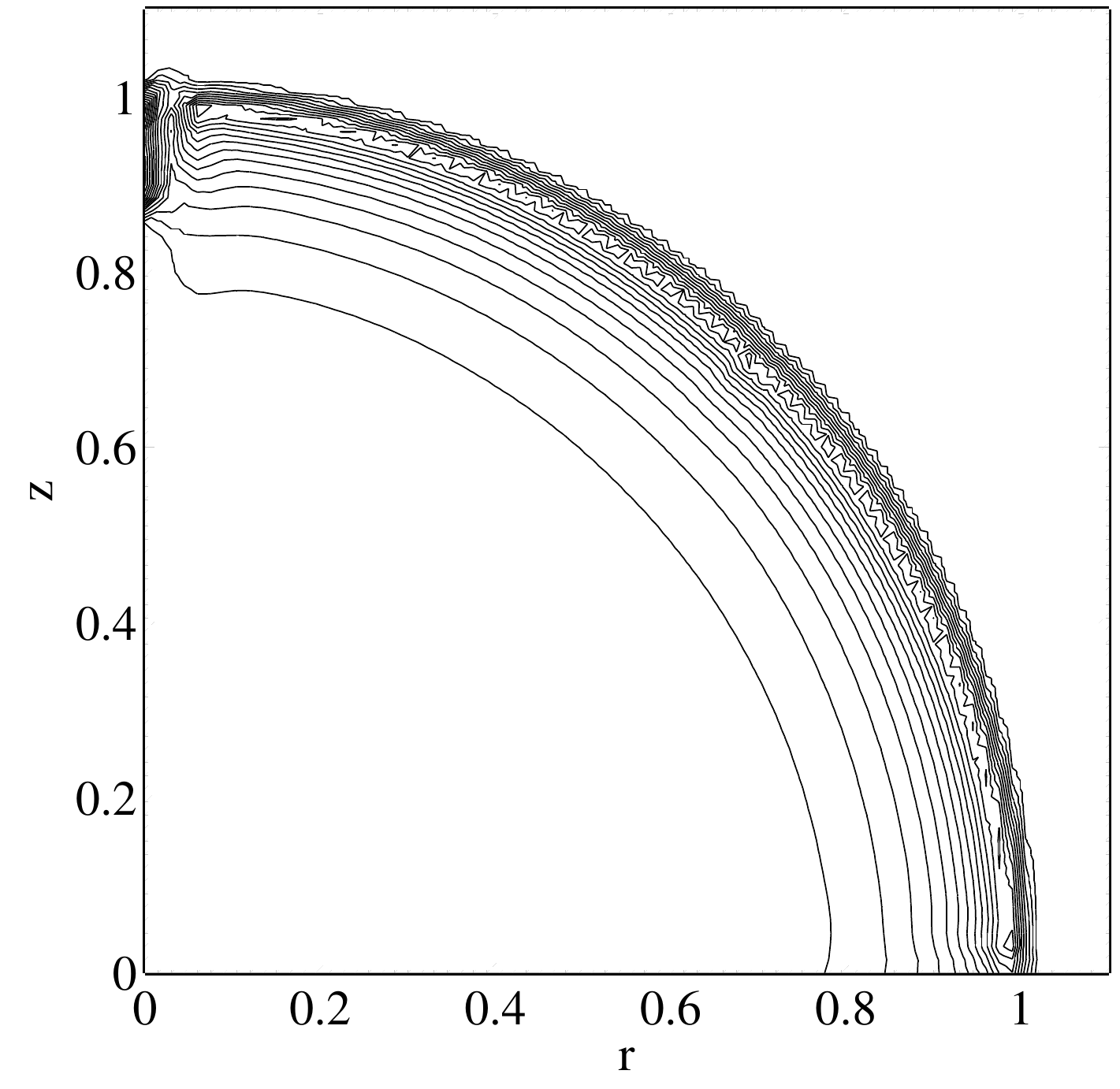}}
	\quad
	\subfloat[Projection to radial coordinates.
	The solid line represents the exact solution
	and the dots represent the numerical solution.]
    {\includegraphics[width=.46\textwidth]{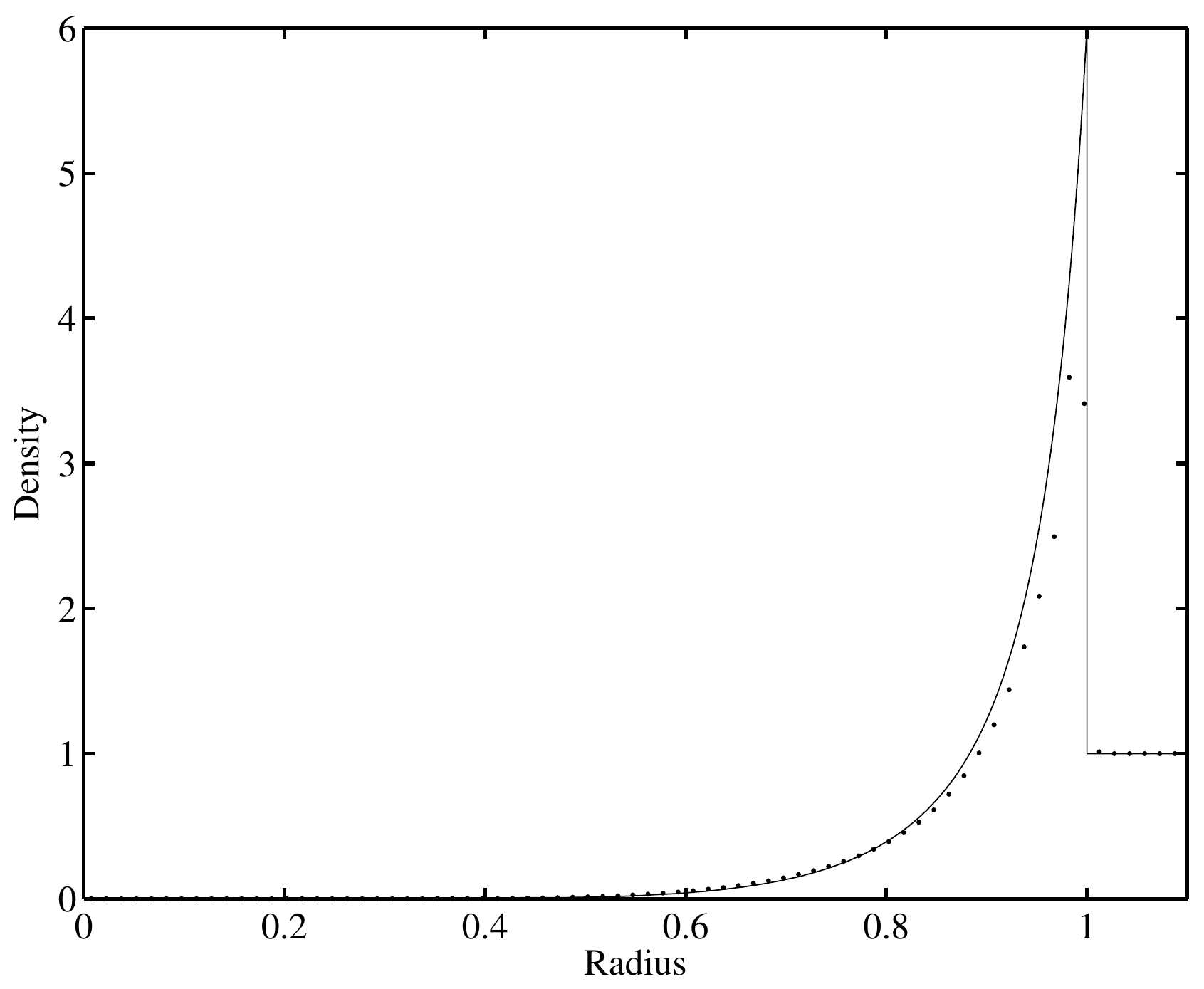}}
    \caption{Sedov blast wave.  
    Result at $t=1$ on an $80\times 80\times 2$ mesh.}
    \label{fig:sedov}
\end{figure}

\section{Conclusion}

We present an integrated linear reconstruction for finite volume scheme on 
arbitrary unstructured grids.  We use the sum of squared residuals as an 
objective function, and the inequalities on midpoints as linear constraints.  
The resulting optimization problem is a convex quadratic programming 
problem which can be efficiently solved using the active-set methods.  
Numerical tests are provided to show the capacity of this new technique to 
deal with computations on a variety of unstructured meshes.  Extension to 
higher-order reconstruction is possible, by increasing the number of 
variables and constraints in the quadratic programming problems.

\section*{Acknowledgements}
The first and third authors gratefully acknowledge the support of the
National Natural Science Foundation of China (Grant No. 91330205, 
11421110001, 11421101 and 11325102). 
The second author would like to thank the support of FDCT 050/2014/A1 
from Macao SAR, MYRG2014-00109-FST from University of Macau, and the
National Natural Science Foundation of China (Grant No. 11401608).

\begin{appendices}
\section{Procedures of active-set methods}

The procedures of active-set method for the double-inequality constrained 
quadratic programming \eqref{eq:dic} are listed in the following Algorithm 1.
\begin{algorithm*}[!h]
  \caption{\em Active-set methods for 
  	          double-inequality constrained quadratic programming}
  \begin{algorithmic}
  	\small
    \STATE $\bm L\leftarrow\bm 0$;
    \STATE $(\delta_1,\delta_2,\cdots,\delta_J)\leftarrow(0,0,\cdots,0)$;
    \WHILE {\TRUE}
        \STATE $\mathbf M\leftarrow[\delta_j\bm a_j^\top]_
                {\delta_j\neq 0}$
        \STATE 
        $\bm\lambda=[\lambda_j]_{\delta_j\neq 0}\leftarrow
        (\mathbf M\mathbf G^{-1}\mathbf M^\top)^{-1}
        \mathbf M(\bm L+\mathbf G^{-1}\bm c)$
        \STATE 
        $\bm p\leftarrow
        \mathbf G^{-1}(\mathbf M^\top\bm{\lambda}-\bm c)
        -\bm L$
        \IF {$\bm p=\bm 0$}
            \IF {$\mathbf M$ is empty \OR $\bm\lambda \ge \bm 0$ }
                 \STATE  \textbf{exit} with optimal solution $\bm L$
            \ELSE 
                \STATE 
                $j\leftarrow \arg\min\limits_{\delta_j\neq 0}\lambda_j$
                \STATE $\delta_j\leftarrow 0$
            \ENDIF 
         \ELSE 
          \STATE
          $\beta_j
          \leftarrow
          \begin{cases}
          \dfrac{M_j-u_0-\bm a_j^\top\bm L}{\bm a_j^\top\bm p},
          & \bm a_j^\top\bm p > 0, \\
          \dfrac{m_j-u_0-\bm a_j^\top\bm L}{\bm a_j^\top\bm p},
          & \bm a_j^\top\bm p < 0, \\
          +\infty, & \bm a_j^\top\bm p=0.
          \end{cases}$
           \STATE 
           $\alpha\leftarrow\min\left\{1,\min\limits_{1\le j\le J}
           \beta_j \right\}$
           \STATE $\bm L\leftarrow\bm L+\alpha\bm p$
              \IF {$\alpha=\beta_k$}
              \STATE $\delta_k\leftarrow\ -\sgn(\bm a_k^\top\bm p)$.
              \ENDIF    
        \ENDIF 
    \ENDWHILE
  \end{algorithmic}
\end{algorithm*}

There are two aspects regarding the numerical issue in our actual 
implementation.  Firstly, although the matrix $\mathbf M$ in the active-set 
method is theoretically full rank at each iteration, we cannot guarantee it 
numerically.  Therefore, the condition $\bm a_j^\top\bm p\gtrless 0$ in 
the computation of the coefficients $\beta_j$ is relaxed to 
$\bm a_j^\top\bm p\gtrless \epsilon U$ accordingly, where 
$\epsilon=10^{-12}$ and $U$ denotes the reference magnitude of the 
solution.  Similarly, the termination condition of the Lagrange multipliers is 
relaxed to $\bm\lambda\ge \epsilon U$. Secondly, the maximum number 
of iterations is chosen as $6$, which is adequate for most two-dimensional 
numerical tests. 

\section{Positivity of axisymmetric Euler equations}

The positivity of axisymmetric Euler equations can be verified using the 
technique in \cite{Zhang2011}. For simplicity, we only consider the purely 
triangular grid. The finite volume scheme \eqref{eq:fvmsource} can be 
rewritten as 
\[
\bm U_0^{n+1}=\bm U_0^{n}
-\left(\lambda(|e_1|+|e_2|+|e_3|)+\dfrac{1}{2}\beta\right)
(\bm U_1^-+\bm U_2^-+\bm U_3^-)+
\dfrac{1}{2}\beta\sum_{j=1}^3 \left(\bm U_j^-+\dfrac{2}{3\beta}
\Delta t_n\bm s(\bm U_j^-,\bm z_j)\right)+\tilde {\bm U},
\]
where $\tilde {\bm U}\in\mathcal G$ is the sum of last nine terms in 
\eqref{eq:ppsplit}, with $\bm U_1^-,\bm U_2^-$ and $\bm U_3^-$ in 
place of $\bm u_1^-,\bm u_2^-$ and $\bm u_3^-$ respectively. 
Therefore, to guarantee the positivity of $\bm U_0^{n+1}$, the time step 
length $\Delta t_n$ satisfies 
\[
a\Delta t_n\le \dfrac{1}{4}\beta h \text{~and~}\bm U_j^-+
\dfrac{2}{3\beta}\Delta t_n\bm s(\bm U_j^-,\bm z_j)\in\mathcal G.
\]

In the following derivation we drop the superscripts and subscripts. Let 
$\tau=2\Delta t_n/3\beta$. Obviously 
$\rho(\bm U+\tau\bm s(\bm U,\bm z))=r\rho>0$. 
A direct algebraic calculation shows that 
\[
\begin{split}
p(\bm U+\tau\bm s(\bm U,\bm z))
&=(\gamma-1)\left(rE-\dfrac{(r\rho u+\tau p)^2+(r\rho v)^2}
{2r\rho}\right)\\
&=\left(r-(\gamma-1)u\tau-\dfrac{(\gamma-1)c^2}{2\gamma r}
\tau^2\right)p,
\end{split}
\]
and as a result,
\[
\begin{split}
p(\bm U+\tau\bm s(\bm U,\bm z))>0 & \Longleftrightarrow
r-(\gamma-1)u\tau-\dfrac{(\gamma-1)c^2}{2\gamma r}\tau^2 > 0\\
&\Longleftrightarrow
\tau<\tau_0:=\left(\left(\dfrac{\gamma ru}{c^2}\right)^2+\dfrac
{2\gamma r^2}{(\gamma-1)c^2}\right)^{1/2}-\dfrac{\gamma ru}{c^2}.
\end{split}
\]
Since
\[
\begin{split}
\tau_0  = &~ \dfrac{4\gamma r}{\left((\gamma(\gamma-1)u)^2
	+2\gamma(\gamma-1)c^2\right)^{1/2}+\gamma(\gamma-1)u}\\
            > &~ \dfrac{4\gamma r}{2\gamma(\gamma-1)|u|+
            	     \sqrt{2\gamma(\gamma-1)}c} \\
         \ge &~ \dfrac{4\gamma r}{\max\left\{2\gamma(\gamma-1),
         	\sqrt{2\gamma(\gamma-1)}\right\}a} \\
            > &~ \dfrac{4\gamma r}{2\gamma^2 a}=\dfrac{2r}{\gamma a},
\end{split}
\]
we obtain the following CFL-like condition
\[
a\Delta t_n \le \beta\min\left\{\dfrac{h}{4},
\dfrac{3r_{\min}}{\gamma}\right\},
\]
where $r_{\min}$ is the minimum radius of quadrature points under 
consideration.
\end{appendices}


\begin{thebibliography}{10}

\bibitem{Wang2013}
Z.J. Wang, K.~Fidkowski, R.~Abgrall, F.~Bassi, D.~Caraeni, A.~Cary,
  H.~Deconinck, R.~Hartmann, K.~Hillewaert, H.T. Huynh, N.~Kroll, G.~May, P.-O.
  Persson, B.~van Leer, and M.~Visbal.
\newblock High-order {CFD} methods: current status and perspective.
\newblock {\em Int. J. Numer. Methods Fluids}, 72(8):811--845, 2013.

\bibitem{Spekreijse1987}
S.~Spekreijse.
\newblock Multigrid solution of monotone second-order discretizations of
  hyperbolic conservation laws.
\newblock {\em Math. Comput.}, 49(179):135--155, 1987.

\bibitem{Barth1989}
T.J. Barth and D.C. Jespersen.
\newblock The design and application of upwind schemes on unstructured meshes.
\newblock In {\em 27th AIAA Aerospace Sciences Meeting}, volume 366, January
  1989.

\bibitem{Liu1993}
X.D. Liu.
\newblock A maximum principle satisfying modification of triangle based
  adapative stencils for the solution of scalar hyperbolic conservation laws.
\newblock {\em SIAM J. Numer. Anal.}, 30(3):701--716, 1993.

\bibitem{Batten1996}
P.~Batten, C.~Lambert, and D.M. Causon.
\newblock Positively conservative high-resolution convection schemes for
  unstructured elements.
\newblock {\em Int. J. Numer. Methods Eng.}, 39(11):1821--1838, 1996.

\bibitem{Hubbard1999}
M.E. Hubbard.
\newblock Multidimensional slope limiters for {MUSCL}-type finite volume
  schemes on unstructured grids.
\newblock {\em J. Comput. Phys.}, 155(1):54--74, 1999.

\bibitem{Kim2005}
K.H. Kim and C.~Kim.
\newblock Accurate, efficient and monotonic numerical methods for
  multi-dimensional compressible flows: {P}art {II}: {M}ulti-dimensional
  limiting process.
\newblock {\em J. Comput. Phys.}, 208(2):570--615, 2005.

\bibitem{Park2010}
J.S. Park, S.H. Yoon, and C.~Kim.
\newblock Multi-dimensional limiting process for hyperbolic conservation laws
  on unstructured grids.
\newblock {\em J. Comput. Phys.}, 229(3):788--812, 2010.

\bibitem{Li2011}
W.~Li, Y.X. Ren, G.~Lei, and H.~Luo.
\newblock The multi-dimensional limiters for solving hyperbolic conservation
  laws on unstructured grids.
\newblock {\em J. Comput. Phys.}, 230(21):7775--7795, 2011.

\bibitem{Choi1998}
H.~Choi and J.~Liu.
\newblock The reconstruction of upwind fluxes for conservation laws: its
  behavior in dynamic and steady state calculations.
\newblock {\em J. Comput. Phys.}, 144(2):237--256, 1998.

\bibitem{Perthame1996}
B.~Perthame and C.W. Shu.
\newblock On positivity preserving finite volume schemes for {E}uler equations.
\newblock {\em Numerische Mathematik}, 73(1):119--130, 1996.

\bibitem{Zhang2010}
X.~Zhang and C.W. Shu.
\newblock On positivity-preserving high order discontinuous {G}alerkin schemes
  for compressible {E}uler equations on rectangular meshes.
\newblock {\em J. Comput. Phys.}, 229(23):8918--8934, 2010.

\bibitem{Zhang2012}
X.~Zhang, Y.~Xia, and C.W. Shu.
\newblock Maximum-principle-satisfying and positivity-preserving high order
  discontinuous {G}alerkin schemes for conservation laws on triangular meshes.
\newblock {\em J. Sci. Comput.}, 50(1):29--62, 2012.

\bibitem{Liu2016}
J.M. Liu, J.X. Qiu, M.~Goman, X.K. Li, and M.L. Liu.
\newblock Positivity-preserving {R}unge-{K}utta discontinuous {G}alerkin method
  on adaptive {C}artesian grid for strong moving shock.
\newblock {\em Numer. Math. Theor. Meth. Appl.}, 9(1):87--110, 2016.

\bibitem{Christlieb2015}
A.J. Christlieb, Y.~Liu, Q.~Tang, and Z.~Xu.
\newblock High order parametrized maximum-principle-preserving and
  positivity-preserving {WENO} schemes on unstructured meshes.
\newblock {\em J. Comput. Phys.}, 281:334--351, 2015.

\bibitem{Chen2016}
L.~Chen and R.~Li.
\newblock An integrated linear reconstruction for finite volume scheme on
  unstructured grids.
\newblock {\em J. Sci. Comput.}, 68(3):1172--1197, 2016.

\bibitem{May2013}
S.~May and M.~Berger.
\newblock Two-dimensional slope limiters for finite volume schemes on
  non-coordinate-aligned meshes.
\newblock {\em SIAM J. Sci. Comput.}, 35(5):A2163--A2187, 2013.

\bibitem{Buffard2010}
T.~Buffard and S.~Clain.
\newblock Monoslope and multislope {MUSCL} methods for unstructured meshes.
\newblock {\em J. Comput. Phys.}, 229(10):3745--3776, 2010.

\bibitem{Gottlieb2001}
S.~Gottlieb, C.W. Shu, and T.~Tadmor.
\newblock Strong stability-preserving high-order time discretization methods.
\newblock {\em SIAM Rev.}, 43(1):89--112, 2001.

\bibitem{Li2005}
R.~Li.
\newblock On multi-mesh $h$-adaptive methods.
\newblock {\em J. Sci. Comput.}, 24(3):321--341, 2005.

\bibitem{EasyMesh}
B.~Niceno.
\newblock {E}asy{M}esh.
\newblock http://web.mit.edu/easymesh\_v1.4/www/easymesh.html.

\bibitem{Nocedal2006}
J.~Nocedal and S.~Wright.
\newblock {\em Numerical Optimization}.
\newblock Springer Science \& Business Media, 2006.

\bibitem{LeVeque1996}
R.J. LeVeque.
\newblock High-resolution conservative algorithms for advection in
  incompressible flow.
\newblock {\em SIAM J. Numer. Anal.}, 33(2):627--665, 1996.

\bibitem{Diskin2012}
B.~Diskin and J.L. Thomas.
\newblock Effects of mesh irregularities on accuracy of finite-volume
  discretization schemes.
\newblock In {\em 50th AIAA Aerospace Sciences Meeting}, January 2012.

\bibitem{Woodward1984}
P.~Woodward and P.~Colella.
\newblock The numerical simulation of two-dimensional fluid flow with strong
  shocks.
\newblock {\em J. Comput. Phys.}, 54(1):115--173, 1984.

\bibitem{Sedov1959}
L.I. Sedov.
\newblock {\em Similarity and Dimensional Methods in Mechanics}.
\newblock Academic Press, 1959.

\bibitem{Kamm2000}
J.R. Kamm.
\newblock Evaluation of the {S}edov-von {N}eumann-{T}aylor blast wave solution.
\newblock {\em Technical Report LA-UR-00-6055, Los Alamos National Laboratory},
  2000.

\bibitem{Zhang2011}
X.~Zhang and C.W. Shu.
\newblock Positivity-preserving high order discontinuous {G}alerkin schemes for
  compressible {E}uler equations with source terms.
\newblock {\em J. Comput. Phys.}, 230(4):1238--1248, 2011.

\end{thebibliography}
\end{document}